\newcommand{\opA}{\mathcal{A}_{p}}
\newcommand{\opAn}{\mathcal{A}_{p,n}}
\newcommand{\symg}{\mathbf{E}u}
\newcommand{\To}{{\mathbb T}}
\newcommand{\unt}{u_n(t,x)}
\newcommand{\uns}{u_n(s,x)}
\newcommand{\Ip}{\mathcal{I}_p}
\newcommand{\zstar}{\Z^2_\ast}
\newcommand{\zplus}{\Z^2_+}
\newcommand{\fcb}{\mathcal{F}C_b^2}
\begin{document}
\section{Introduction}
Kolmogorov equations in infinite dimensions have been worked on quite intensively, especially during the last decade. In mathematical fluid dynamics the efforts mostly concentrated on the stochastic Navier-Stokes equations in two space dimensions. We refer to Flandoli and Gozzi \cite{FlandoliGozzi}, Da Prato et. al. \cite{BarbuChannel, BarbuNSE1, BarbuNSE2, BarbuReflection, DPDBurgers} and Stannat \cite{Stannat2DNSE, StaCoriolis} for articles on stochastic Burgers and Navier-Stokes equations on different domains and with different choices of noise regularity exploiting a variety of strategies to prove $L^p$-uniqueness of the associated Kolmogorov operators w.\,r.\,t. some infinitesimally invariant measure. In this article we extend such results to a more general and likewise more realistic fluid model. In detail, we consider a homogeneously distributed, incompressible fluid under the influence of additive Gaussian random forces. The so-called stress tensor $\mathbf{S}$ modeling molecular forces inside the fluid, in particular shear stress, is 
assumed to have a nonlinear dependence on the 
symmetric part of the velocity gradient $\symg \df \frac12 ( \nabla u + \nabla u^T)$. One often assumes a functional structure of the form $\mathbf{S}(\symg) = \nu (\abs{\symg}) \symg$ with a viscosity function $\nu \in C([0,\infty))$ depending on the Frobenius or Hilbert-Schmidt norm $\abs{\symg}^2 \df (\sum_{ij} (\symg)_{ij}^2)$. Such fluids are usually referred to as generalized Newtonian. For the precise assumptions on $\mathbf{S}$ we refer to the next section. The equations concerning conservation of mass and momentum then read as follows.
\begin{empheq}[right=\quad\empheqrbrace]{equation}\label{eq:SPDE}
\begin{aligned}
\partial_t u &= \Div \mathbf{S} (\symg ) - \big( (u \cdot \nabla ) u\big) - \nabla \pi + \eta & \text{in } [0,\infty) \times \To^2,\\
\Div u &= 0 &\text{in }[0,\infty) \times \To^2,\\
u(0)&= u_0  &\text{in } \To^2,
\end{aligned}
\end{empheq}
These describe the time evolution of the velocity field $u$ and the hydrodynamic pressure $\pi$. For technical reasons we restrict ourselves to the space periodic setting on the two-dimensional torus $\To^2 \df (0,2\pi)^2$ with periodic boundary conditions. $\eta$ is supposed to be some Gaussian noise, white in time and colored in space. Recall that the choice $\nu(x) = \nu_0 x$, i.\,e. the linear case $\mathbf{S}(\symg) = \nu_0 \symg$, just reduces the equations to the well-known stochastic Navier-Stokes equations.

The case of $\eta$ being some given deterministic external force has been under consideration in the PDE literature for more than four decades. We refer to Ladyzhenskaya \cite{Lady1} and Lions \cite{Lions} for the pioneering contributions in this direction. Extensions of these results for such generalized Newtonian fluid models in the space periodic setting can be found in the monograph be Necas et. al. \cite{NecasBook}, also note \cite{MalekRajagopal} for a more recent overview. In the SPDE community these models have attracted attention only recently, see Terasawa and Yoshida \cite{TerasawaPLF} for existence and uniqueness results on weak martingale solutions as well as Liu and R\"ockner \cite{LiuLocallyMonotone} for variational solutions. The extra stress tensor chosen there is a polynomial of degree $p-1$ and the corresponding viscosity function is given by
\begin{equation}\label{eq:viscosity}
\nu(x) = \nu_0 (1+x^2)^{\frac{p-2}{2}}, \quad p>1.
\end{equation}
In two space dimensions the condition on $p$ for the existence of weak martingale solutions is $p> 6/5$ whereas pathwise uniqueness holds provided $p \geq 2$, see \cite[Theorem 2.1.3, Theorem 2.2.1]{TerasawaPLF}. Note that \eqref{eq:SPDE} has a fully nonlinear drift (in the case $p \neq 2$) that complicates the analysis compared to equations of semi-
linear structure.

Instead of solving the SPDE \eqref{eq:SPDE} we study the associated Kolmogorov (backward) equation. For this reason consider the well-known translation of \eqref{eq:SPDE} to an abstract stochastic evolution equation on some function space $H$ given by
\begin{empheq}[right=\quad\empheqrbrace]{equation}\label{eq:abstractSEE}
\begin{aligned}
\mathrm{d} u(t) &= \Big( \opA\big(u(t)\big) + B\big( u(t) \big) \Big) \dt + \sqrt{Q} \dwt, & t \geq 0,\\
u(0) &= u_0.
\end{aligned}
\end{empheq}
Equation \eqref{eq:abstractSEE} is driven by some cylindrical Wiener process $W$ on $H$. The appropriate definitions of spaces and operators will be given in the next section. Supposed there exists a weak solution to \eqref{eq:abstractSEE}, then It\^o's formula implies that its transtion probabilities solve the Fokker-Planck equation given by the associated Kolmogorov operator. On smooth cylinder functions $\fcb$ (see next section) this operator is formally identified as
\begin{equation*}
\big(K \phi \big) (x) \df \frac12 \tr QD^2 \phi(x)+ \scp{\opA(x) + B(x)}{D \phi(x)}_H.
\end{equation*}
The Fokker-Planck equation itself -- since an equation for measures -- has not been in the center of attention, however see \cite{Bogachev1, Bogachev2} for results on existence and uniqueness in infinite dimensional Hilbert spaces. It is more common to study the associated Kolmogorov equation $\dot{v}(t,x) = Kv(t,x)$, $v(0,\cdot) = \phi$, $x \in H$ where $\phi$ is an inital condition for this parabolic PDE in infinitely many variables. Note that this Cauchy problem is only reasonable for some extension of $K$ since even if $\phi \in \fcb$ we essentially never get $v(t,\cdot) \in \fcb$. Thus, the main interest is the well-posedness of this Cauchy problem, i.\,e. the question is wether $(K,\fcb)$ can be uniquely extended to the generator of a Markovian $C_0$-semigroup on a suitable function space. Except the up to the best of our knowledge only contribution to solve this equation directly in spaces of weighted weakly continuous functions in \cite{Sobol}, the common method involves a reduction of the problem to $L^p(\mu)$, in our case $L^2(\mu)$, where $\mu$ is an infinitesimally invariant measure for $K$. Well-posedness of the Cauchy problem is equivalent to the essential maximal dissipativity of $K$ on $L^2(\mu)$, also called $L^2$-uniqueness w.\,r.\,t. the reference measure $\mu$. This may then be used to construct weak solutions in the sense of the martingale problem initiated by Stroock and Varadhan \cite{Stroock}, see e.\,g. the monograph \cite{Eberle} for a discussion on this subject.

In this article, we restrict ourselves to the power law type fluids with viscosity function \eqref{eq:viscosity} studied in \cite{TerasawaPLF}, however we are only able to study the Kolmogorov operator associated to \eqref{eq:abstractSEE} for $p \in (1,2)$ where the well-posedness of \eqref{eq:abstractSEE} and in particular uniqueness remains an open question, see the remark following Theorem \ref{thm:Measure}. Let us remark at this point, that $p=2$ can be included, i.\,e. the stochastic Navier-Stokes equations, however this particular example has already been treated in e.\,g. \cite{BarbuNSE1, StaCoriolis}. Thus, we exclude $p=2$ which considerably simplifies the presentation. We prove that for this range of $p$, $K$ admits at least one infinitesimally invariant measure, see Theorem \ref{thm:Measure}. Moreover, $K$ is $L^2$-unique under additional assumptions on $p$, see Theorem \ref{thm:MainTheorem}. The proof is based on exponential moment estimates for the infinitesimally invariant measure in Theorem \ref{thm:Measure} and pointwise gradient estimates for 
the transition semigroup of suitable finite dimensional approximating problems.  
\section{Main Results}
Before we state the main result, let us introduce some notation. By $C^\infty_{\text{per}}(\To^d; \R^2)$ we denote the space of smooth and $2\pi$-periodic vector fields on the two-dimensional torus. Furthermore let
\[
\textstyle \mathcal{D}_\sigma \df \Big\{ u \in C^\infty_{\text{per}}(\To^d; \R^2) : \Div u=0, \int u(\xi) \dxi = 0 \Big\}
\]
be the subspace of divergence free and centered vector fields. Then, the state space $H$ of equation \eqref{eq:abstractSEE} is given by the closure of $\mathcal{D}_\sigma$ in $L^2(\To^2;\R^2)$. Due to the nonlinear structure of the equation, we also need $L^p$-spaces with $p \neq 2$. Thus, introduce $(H^{k,p}, \norm{\cdot}_{k,p})$, $k\geq 1$, $p \in (1,\infty)$ as the closure of $\mathcal{D}_\sigma$ w.\,r.\,t. the standard Sobolev norms in $W^{k,p} (\To^2; \R^2)$, as well as their dual spaces $(H^{k,p})^\ast = H^{-k,q}$, $1/p + 1/q = 1$, corresponding to the closure in $W^{-k,q}(\To^d;\R^2)$. For abbreviation introduce $V\df H^{1,2}$.

As mentioned in the introduction, we study equation \eqref{eq:SPDE} under the same assumption on the extra stress tensor as in \cite{TerasawaPLF}.
\begin{assum}\label{assum:PowerLaw}
The extra stress tensor is given by $\mathbf{S}(\symg) \df \nu(\abs{\symg}) \symg$ with $\nu$ as in \eqref{eq:viscosity} with parameter $p \in (1,2)$.
\end{assum}
\begin{rem}
Note that one can impose a more general assumption on $\mathbf{S}$ using an extra stress tensor given by a so-called $p$-potential, see e.\,g. \cite{DieningRuzicka, NecasBook}. This is a common extension and is designed to mimic the main feature of the function $\nu$ in \eqref{eq:viscosity}, namely some $p$-ellipticity. However, such an extension has basically no effect on the analysis and thus we sacrifice this more general framework for a concise presentation.
\end{rem}

Let $\mathcal{P}$ denote the Helmholtz-Leray projection $\mathcal{P}: L^2(\To^2;\R^2) \to H$. Define the nonlinear operators $\opA: D(\opA) \subset H \to H, u \mapsto \mathcal{P} \Div \mathbf{S} (\symg )$, $B: D(B) \subset H \times H \to H, (u,v) \mapsto -\mathcal{P} (u \cdot \nabla) v$, $B(u) \df B(u,u)$ and the Stokes operator $A: H^{2,2} \to H, u \mapsto \mathcal{P}\Delta u$. A complete orthonormal system of eigenfunctions of $A$ corresponding to eigenvalues $- \abs{k}^2$, $k\in \zstar \df \Z^2\setminus \{0\}$ is given by
\[
e_k(\xi) \df \frac{1}{\sqrt{2}\pi} \frac{k^\bot}{\abs{k}}
\begin{cases}
\sin (k \xi) &\text{if } k \in \zplus,\\
\cos (k \xi)& \text{if } k \in -\zplus,
\end{cases}
\]
where $\zplus \df \{ k=(k_1,k_2) \in \zstar: k_1 \geq 0\}$. With this orthonormal system, the cylindrical Wiener process $W$ in \eqref{eq:abstractSEE} has a representation as the formal sum 
\[
W(t) \df \sum_{k \in \zstar} \beta_k(t) e_k
\]
with a family of independent real valued Brownian motions $(\beta_k)_{k \in \zstar}$ on a common underlying stochastic basis $(\Omega, \algF, \{\algF_t\}, \PP)$. The covariance operator $Q \in L(H)$ is symmetric, non-negative and nuclear. In this article, we shall use the following additional assumption on $Q$, which can be found in e.\,g. \cite{BarbuNSE1} in a similar formulation.
\begin{assum}\label{assum:Q}
Suppose $\sqrt{Q} \in L_2(H,V)$, i.\,e. Hilbert-Schmidt from $H$ to $V$. In particular, $\tr (-A) Q = \norm{\sqrt{Q}}_{L_2(H,V)} < \infty$ and $Q$ can be restricted to a bounded, linear operator on $V$. Moreover, $\sqrt{Q}$ can be extended to a bounded linear operator from $V^\ast$ to $H$.
\end{assum}
\begin{rem}
At this point we would like to mention that the condition on $\tr_V Q<\infty$, imposed in \cite{StaCoriolis}, is not sufficient to apply It\^o's formula to $\norm{\cdot}_V^2$. To see this, assume for simplicity that $Q$ is diagonal w.\,r.\,t. $(e_k)_{k \in \zstar}$ with eigenvalues $\sigma_k^2$. Then $\tr_H Q = \sum_{k \in \zstar} \sigma_k^2$ and
\[
\tr_V Q = \sum_{k \in \zstar} \scp{Q e_k^V}{e_k^V}_V = \sum_{k \in \zstar} \abs{k}^{-2} \scp{Q e_{-k}}{e_{-k}}_V = \sum_{k \in \zstar} \sigma_k^2 \abs{k}^{-2} \scp{e_k}{e_k}_V = \sum_{k \in \zstar} \sigma_k^2.
\]
However, $\tr_V Q < \infty$ can be a suitable condition on $Q$ in this context, provided $W$ is a cylindrical Wiener process on $V$ instead of $H$. Obviously, this allows an application of It\^o's formula to $\norm{\cdot}_V^2$.
\end{rem}
 
The solvability of \eqref{eq:abstractSEE} in the linear, i.\,e. Navier-Stokes case, has been studied in various formulations, see \cite{KuksinLecture} for a survey. In the nonlinear case, the authors in \cite{TerasawaPLF} have shown existence of weak martingale solutions under Assumption \ref{assum:Q}, $d=2$, $p> 6/5$ and $u_0 \in V$. Pathwise uniqueness holds if moreover $p \geq 2$. Let us mention for completeness that the variational approach in \cite{LiuLocallyMonotone} yields a unique strong solution for $p\geq 2$ under less assumptions on $Q$ and $u_0$, namely $\tr Q < \infty$ and $u_0 \in H$. However, there is up to now no extension to $p<2$. Now, let us get back to the associated Kolmogorov operator to \eqref{eq:abstractSEE}. In order to give meaning to its definition we need to specify a suitable domain. Recall
\begin{equation}\label{eq:KolmogorovOp}
\big(K \phi \big) (x) \df \frac12 \tr QD^2 \phi(x)+ \scp{\opA(x) + B(x)}{D \phi(x)}_H.
\end{equation}
where $D$ denotes the Fr\'ech\'et derivative w.\,r.\,t. $x$ in $H$. It is reasonable to consider the space of twice continuously differentiable and finitely based cylinder functions with bounded partial derivatives as a domain for $K$. In formula
\[
\fcb \df \Big\{ \phi(x) = \tilde{\phi}(x_{k_1}, \dots, x_{k_n}): n \in \N, \tilde{\phi} \in C_b^2\big(\R^n\big), x_{k_i} = \scp{x}{e_{k_i}}_H \Big\}.
\]
Indeed, for any $\phi \in \fcb$ the first order part and thus $K$ is well-defined.

A probability measure $\mu$ on $(H, \mathcal{B}(H))$ is called infinitesimally invariant for $(K, \fcb)$ if $K(\fcb) \subset L^1(\mu)$ and $\int K \phi \dmu = 0$ for all $\phi \in \fcb$. By general arguments $(K,\fcb)$ is well-defined on $L^1(\mu)$ and moreover dissipative, hence closable. Note that if $K(\fcb) \subset L^p(\mu)$ for some $p \geq 1$ then the same holds with the exponent $1$ replaced by $p$. For such a measure $\mu$ we are interested in the well-posedness of the Kolmogorov equation associated to \eqref{eq:abstractSEE}, i.\,e. the Cauchy problem $\dot{u} = Ku$ for $(K, \fcb)$ in the space $L^p(\mu)$. It is well-known that if the range condition
\begin{equation}\label{eq:RangeCondition}
(\lambda - K) \big(\fcb\big) \subset L^p(\mu) \text{ dense for some }\lambda >0
\end{equation}
is satisfied, the closure $(\overline{K}, D(\overline{K}))$ generates a $C_0$-semigroup $(T_t)_{t\geq 0}$ of Markovian contractions on $H$. Furthermore, this is the only $C_0$-semigroup that has a generator that extends $(K, \fcb)$. In this case we say that $(K, \fcb)$ is $L^p$-unique w.\,r.\,t. $\mu$.

Under Assumptions \ref{assum:PowerLaw} and \ref{assum:Q} we prove the following theorem on existence and moment estimates for infinitesimally invariant measures. The constant $C_p$ appearing below is the one for which inequality \eqref{proof:EstimateIp} holds.
\begin{thm}\label{thm:Measure}
There exists an infinitesimally invariant measure $\mu$ for $(K, \fcb)$ satisfying the following exponential a priori estimate
\[
\int \Ip(x) \mathrm{e}^{\eps \norm{x}_V^p} \dmu(x) < \infty, \quad \text{for }0 < \eps < \eps^\ast \fd \frac{2\nu_0(p-1)}{p \cdot C_p \norm{Q}_{L(V)}}.
\]
Here,
\[
\Ip(x) \df \int_{\To^2} \big(1+\abs{\mathbf{E}x}^2\big)^{\frac{p-2}{2}} \abs{\nabla \mathbf{E}x}^2 \dxi.
\]
Finally, $(K, \fcb)$ is well-defined on $L^2(\mu)$.
\end{thm}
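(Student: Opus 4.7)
The plan is a Galerkin approximation followed by tightness and passage to the limit. Let $P_n$ denote the orthogonal projection of $H$ onto $H_n \df \mathrm{span}\{e_k : \abs{k}\leq n\}$ and consider on $H_n$ the finite-dimensional SDE
\[
\mathrm{d}u_n = \bigl(P_n\opA(u_n) + P_n B(u_n)\bigr)\dt + P_n\sqrt{Q}\,\mathrm{d}W(t),
\]
whose coefficients are smooth. Once the a priori bound below excludes explosion, the associated Markov semigroup is Feller and Krylov--Bogoliubov yields an invariant probability measure $\mu_n$, regarded as a Borel measure on $H$. A uniform exponential $V$-moment together with the Rellich embedding $V \hookrightarrow\hookrightarrow H$ then gives tightness of $(\mu_n)_n$ in $H$, and Prokhorov provides a weak limit $\mu$ along a subsequence.

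\textbf{Key a priori bound.} First apply It\^o's formula to $\norm{u_n}_V^2 = \scp{-Au_n}{u_n}_H$. The convective contribution vanishes by the classical two-dimensional periodic enstrophy cancellation $\scp{B(u,u)}{-Au}_H = 0$ for divergence-free $u$, easily verified in vorticity form, using $-Au = \nabla^\perp \omega$ and $\int \omega\,(u\cdot\nabla)\omega\,\dxi = 0$. The nonlinear viscous term is handled by integration by parts together with the pointwise $p$-ellipticity of $\mathbf{S}$, producing $\scp{\opA(u)}{-Au}_H \leq -\nu_0(p-1)\Ip(u)$. With $\tr(-A)Q = \norm{\sqrt{Q}}_{L_2(H,V)}^2$ from Assumption \ref{assum:Q}, these combine to
\[
\mathrm{d}\norm{u_n}_V^2 \leq -2\nu_0(p-1)\Ip(u_n)\dt + c_0\dt + \mathrm{d}M_t.
\]
A second application of It\^o to $\Phi(x) = \mathrm{e}^{\eps x^{p/2}}$, combined with inequality \eqref{proof:EstimateIp} (which relates $\norm{u_n}_V^p$ and $\Ip(u_n)$ through the constant $C_p$), lets one absorb the second-order correction (of order $\eps^2\norm{u_n}_V^{2p-2}\norm{Q}_{L(V)}$) into the $\Ip$-coercivity for every $\eps < \eps^\ast$, and yields uniformly in $n$ and $t$
\[
\mathbb{E}\Bigl[\mathrm{e}^{\eps\norm{u_n(t)}_V^p}\bigl(1+\Ip(u_n(t))\bigr)\Bigr] \leq C.
\]
Integrating against $\mu_n$ and using invariance produces the claimed uniform-in-$n$ moment bound.

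\textbf{Limit passage and $L^2(\mu)$-well-definedness.} The functional $x \mapsto \Ip(x)\mathrm{e}^{\eps\norm{x}_V^p}$ is lower semicontinuous on $H$ (the $V$-norm is lsc and the integrand defining $\Ip$ is convex), so Fatou's lemma transports the bound from $\mu_n$ to $\mu$. For the infinitesimal invariance one starts from $\int K_n\phi\,\mathrm{d}\mu_n = 0$ where $K_n$ is the Galerkin Kolmogorov operator and passes to the limit term by term. For $\phi\in\fcb$, $D\phi$ takes values in a fixed finite-dimensional subspace of $H$ spanned by smooth eigenfunctions, and one obtains, via integration by parts and H\"older,
\[
\abs{\scp{\opA(x)}{D\phi(x)}_H} \leq C(\phi)\bigl(1+\norm{x}_V^{p-1}\bigr), \qquad \abs{\scp{B(x)}{D\phi(x)}_H} \leq C(\phi)\norm{x}_H^2,
\]
so the exponential moment supplies uniform integrability in $n$ and one concludes $\int K\phi\,\mathrm{d}\mu = 0$. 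The same two estimates together with the polynomial $V$-moments implied by the exponential bound also give $K\phi\in L^2(\mu)$.

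\textbf{Main difficulty.} The delicate ingredient is the exponential moment itself: the It\^o correction to $\mathrm{e}^{\eps\norm{u_n}_V^p}$ produces a term of order $\eps^2\norm{u_n}_V^{2p-2}\norm{Q}_{L(V)}$ which must be absorbed by the $\Ip$-coercivity. Because $2p-2<2$ for $p<2$ this absorption is not automatic and requires a Young-type interpolation based on \eqref{proof:EstimateIp}, with the constants tuned so that the argument works precisely for $\eps < \eps^\ast = 2\nu_0(p-1)/(p\,C_p\,\norm{Q}_{L(V)})$. The limit passage in the nonlinear drift $\int\scp{\opA(x)}{D\phi(x)}_H\,\mathrm{d}\mu_n$ is then routine once this moment control is in place.
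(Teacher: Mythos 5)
Your proposal follows the same global architecture as the paper: Galerkin approximation, Krylov--Bogoliubov invariant measures $\mu_n$, a uniform exponential moment built on the enstrophy balance $\scp{\opA(u)+B(u)}{(-A)u}_H\le-\nu_0(p-1)\Ip(u)$ (Lemmas \ref{app:TestStokes}, \ref{app:ConvectionStokes}) together with $\norm{u}_V^p\le C_p\,\Ip(u)$, tightness via the compact embedding $V\hookrightarrow H$, Fatou for the limit measure, and passage to the limit in $\int K_n\phi\,\mathrm{d}\mu_n=0$. The one point where you genuinely diverge is the derivation of the exponential estimate: you test invariance directly with the Lyapunov function $\mathrm{e}^{\eps(1+\norm{x}_V^2)^{p/2}}$ and absorb the second-order It\^o correction of size $\eps^2\norm{x}_V^{2p-2}\norm{Q}_{L(V)}$ into the $\Ip$-coercivity, while the paper applies It\^o to the polynomial family $(1+\norm{x}_V^2)^{\frac p2 k+1}$, derives the recursion \eqref{proof:BasicMomentFormula} for the moments $\int\Ip(x)(1+\norm{x}_V^2)^{\frac p2 k}\,\mathrm{d}\mu_n$, and sums the series $\sum_k\eps^k/k!$. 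Both give exactly the threshold $\eps^\ast$, so your route is legitimate and somewhat shorter; the recursion buys automatic control of a priori finiteness (each moment is finite by the previous step of the induction, starting from $k=0$), which your one-shot absorption has to supply by hand.

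Three repairs are needed to make your key step airtight. First, the absorption argument subtracts possibly infinite quantities; you must truncate (cut off the exponent, or localize with stopping times) before absorbing, exactly the issue the paper's induction over $k$ avoids. Second, the chain rule attaches the factor $(1+\norm{x}_V^2)^{\frac{p-2}{2}}$ to the coercive term, so what you control directly is $\int(1+\norm{x}_V^2)^{\frac{p-2}{2}}\Ip(x)\,\mathrm{e}^{\eps\norm{x}_V^p}\,\mathrm{d}\mu_n$, which for $p<2$ carries a strictly weaker weight than claimed; to reach the stated bound you must trade the lost polynomial factor against $\mathrm{e}^{(\eps'-\eps)\norm{x}_V^p}$ for some $\eps<\eps'<\eps^\ast$, which is why the conclusion still holds on the full interval $(0,\eps^\ast)$ but should be said. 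Third, the intermediate claim $\mathbb{E}\bigl[\mathrm{e}^{\eps\norm{u_n(t)}_V^p}(1+\Ip(u_n(t)))\bigr]\le C$ pointwise in $t$ is not what It\^o yields (the product $\Ip\cdot\mathrm{e}^{\eps\norm{\cdot}_V^p}$ appears under the time integral, equivalently under $\mu_n$ via $\int K_nF\,\mathrm{d}\mu_n=0$), and it is also not needed; only the $\mu_n$-integral enters. Two minor remarks: lower semicontinuity of $\Ip$ on $H$ does not follow from joint convexity of the integrand (which fails for $p<2$), but from continuity in $\mathbf{E}x$ plus convexity in $\nabla\mathbf{E}x$ on sets where \eqref{proof:EstimateIp} bounds $\norm{\cdot}_{2,p}$, or one argues with cut-offs as the paper does; and since $K$ in \eqref{eq:KolmogorovOp} is defined through the $H$-pairing, your weak-form bounds $\abs{\scp{\opA(x)}{D\phi(x)}_H}\le C(\phi)(1+\norm{x}_V^{p-1})$ and $\abs{\scp{B(x)}{D\phi(x)}_H}\le C(\phi)\norm{x}_H^2$ should be accompanied by the observation that $\opA(x)\in H$ for $\mu$-a.e.\ $x$ (Lemma \ref{app:NormA} plus $\Ip\in L^1(\mu)$), so that they indeed bound $K\phi$; with that, your $L^2(\mu)$-well-definedness argument is a slightly more economical alternative to the paper's use of $\norm{\opA(x)}_H^2\le C\,\Ip(x)$.
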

\begin{rem}
This theorem also explains the reason for the restriction to $p<2$. For $(K, \fcb)$ to be well-defined on $L^1(\mu)$ the term $\scp{\opA(x)}{D\phi(x)}$ has to be $\mu$-integrable. By Lemma \ref{app:NormA} we obtain $\norm{\opA(x)}^2 \leq C \Ip(x)$ for $p < 2$, in fact also trivially for $p = 2$. However for $p > 2$ the inequality is in the opposite direction, hence we cannot conclude that $(K, \fcb)$ is well-defined on $L^1(\mu)$, although the operator $\opA$ has much more regularity and equation \eqref{eq:abstractSEE} has a unique strong solution. One may think about working in $L^p$ instead of $L^2$, but certain essential properties of $\mathbf{S}$, e.\,g. monotonicity (Lemma \ref{app:Monotone}), do not hold in $L^p$. Thus, it is an open question how to extend the results of this article to $p > 2$.
\end{rem}

\begin{assum}\label{assum:LowerBound}
Let $p \in (p^\ast,2)$ where $p^\ast$ is the second root of $p^3 -8p^2 + 14p-6 =0$, approximately $p^\ast \approx 1.60407$.
\end{assum}

Under this additional assumption the uniqueness theorem below holds.
\begin{thm}\label{thm:MainTheorem}
Let $\mu$ be any infinitesimally invariant measure satisfying the moment estimates from Theorem \ref{thm:Measure}, in particular $(K, \fcb)$ is well-defined on $L^2(\mu)$. Then the operator $(K, \fcb)$ is $L^2$-unique w.\,r.\,t. $\mu$.
\end{thm}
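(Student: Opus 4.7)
The strategy is to verify the range condition \eqref{eq:RangeCondition} with $p=2$, which by the discussion preceding the theorem is equivalent to the asserted $L^2$-uniqueness. I follow the Galerkin route used for the stochastic Navier--Stokes equations in \cite{BarbuNSE1, StaCoriolis}, now adapted to the fully nonlinear monotone drift $\opA$. Concretely, for each $f \in \fcb$ I will produce approximants $v_n \in \fcb$ such that $(\lambda - K) v_n \to f$ in $L^2(\mu)$; together with the density of $\fcb$ in $L^2(\mu)$ this establishes \eqref{eq:RangeCondition}.

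Let $P_n$ denote the orthogonal projection of $H$ onto the span $H_n$ of the first $n$ eigenfunctions of the Stokes operator, and consider the finite-dimensional SDE
\[
\mathrm{d} u_n = P_n \opA(u_n)\, \dt + P_n B(u_n)\, \dt + P_n \sqrt{Q}\, \dwt, \qquad u_n(0) = x \in H_n.
\]
Monotonicity of $\opA$ (Lemma \ref{app:Monotone}) together with the standard energy estimate on $B$ in two dimensions yields a unique non-exploding strong solution; denote its transition semigroup by $P^n_t$ and its generator by $K_n$. For $f \in \fcb$ the resolvent $u_n := \int_0^\infty \mathrm{e}^{-\lambda t} P^n_t f\, \dt$ lies in $C^2_b(H_n)$ and satisfies $(\lambda - K_n) u_n = f$, so $v_n := u_n \circ P_n$ is a natural candidate inside $\fcb$. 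A direct computation yields
\[
(\lambda - K) v_n(x) - f(P_n x) = \scp{\opA(x) - \opAn(P_n x)}{Du_n(P_n x)}_H + \scp{B(x) - P_n B(P_n x)}{Du_n(P_n x)}_H + \tfrac12 \tr\bigl((Q - P_n Q P_n)\, D^2 u_n(P_n x)\bigr).
\]
Combined with $\norm{\opA(x)}_H^2 \leq C\, \Ip(x)$ (Lemma \ref{app:NormA}), the exponential integrability of $\Ip(x)\,\mathrm{e}^{\eps \norm{x}_V^p}$ under $\mu$ from Theorem \ref{thm:Measure}, and routine polynomial bounds on $B$ and $Q$, the right-hand side will tend to zero in $L^2(\mu)$ as $n \to \infty$ provided one has uniform-in-$n$ pointwise gradient estimates of the form
\[
\abs{D u_n(x)}_H + \abs{D^2 u_n(x)}_{L(H)} \leq C(\lambda, f)\,\bigl(1 + \norm{x}_V^a\bigr),
\]
with polynomial $a$ compatible with the exponential moments.

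The heart of the proof, and the only place where Assumption \ref{assum:LowerBound} will be used, is this uniform gradient estimate. The derivative process $\eta^h_t := Du_n(t,x)[h]$ satisfies the Galerkin-linearized equation driven by $D\opA(u_n)$ and $DB(u_n)$. Applying It\^o's formula to $\norm{\eta^h_t}_H^2$, the $p$-monotonicity of $\opA$ generates a coercive term of the order $\int_0^t \int_{\To^2} (1 + \abs{\mathbf{E} u_n}^2)^{(p-2)/2} \abs{\nabla \eta^h}^2\, \dxi\, \mathrm{d}s$, while the linearization $DB(u_n)[\eta^h] = -\mathcal{P}\bigl((\eta^h \cdot \nabla) u_n + (u_n \cdot \nabla) \eta^h\bigr)$, after exploiting the divergence-free property of $u_n$ to cancel the second piece, leaves a bad term bounded by $\norm{\nabla u_n}_{L^2}\, \norm{\eta^h}_{L^4}^2$. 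Absorbing this bad term into the degenerate coercive one via Gagliardo--Nirenberg interpolation and Young's inequality is the principal technical obstacle: the weight $(1 + \abs{\mathbf{E} u_n}^2)^{(p-2)/2}$ degenerates more strongly the smaller $p$ is, whereas the convective contribution is $p$-independent. Carrying out the exponent matching carefully produces the cubic algebraic constraint $p^3 - 8p^2 + 14p - 6 > 0$, i.\,e. exactly $p > p^\ast$. Once this closure is achieved, the gradient bound follows by Gronwall, and the error analysis above delivers the range condition via the moment estimates of Theorem \ref{thm:Measure}.
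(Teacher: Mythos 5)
Your overall strategy is genuinely different from the paper's and, as sketched, it has gaps that the paper's route is specifically designed to avoid. The paper does not construct approximate solutions of the resolvent equation at all: it argues by duality, taking $\psi\in L^2(\mu)$ orthogonal to $(\lambda-K)(\fcb)$, differentiating $t\mapsto \mathrm{e}^{-\lambda t}\int P_t^n\phi\,\psi\,\mathrm{d}\mu$ and showing that the only error, the purely first-order commutator $\scp{\opA-\opAn+B-B_n}{DP_t^n\phi}_H$, tends to $0$ in $L^2(\mu)$ using Lemmas \ref{app:NormA}, \ref{app:NormB}, the gradient estimate of Proposition \ref{prop:EstTransSemi} and the exponential moments of Theorem \ref{thm:Measure}. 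Your direct route requires $v_n=u_n\circ P_n\in\fcb$ together with uniform-in-$n$ \emph{polynomial} bounds on $Du_n$ and $D^2u_n$, and neither is available. Since $B_n$ is quadratic (only locally Lipschitz), gradients of $P_t^nf$ cannot be expected to be bounded; the bound one can actually prove (Proposition \ref{prop:EstTransSemi}) is $\norm{DP_t^n\phi(x)}_H\le\norm{\phi}_{\lip}\mathrm{e}^{\delta\norm{x}_V^p+Ct}$, exponential in $\norm{x}_V^p$ and in $t$ (so one must also take $\lambda$ large in the resolvent). A uniform second-order estimate for the degenerate fully nonlinear drift is nowhere in reach and is exactly what the duality argument avoids needing; incidentally your trace error term is identically zero, since for $M=P_nD^2u_n(P_nx)P_n$ one has $\tr(QM)=\tr(P_nQP_nM)$, so no second-order error occurs in either approach. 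To repair your scheme you would have to replace the polynomial bounds by exponential ones compatible with Theorem \ref{thm:Measure} and insert an additional cutoff/approximation layer to land in $\fcb$; none of this is sketched.

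The second, more substantive gap is in the gradient estimate itself. Estimating the linearized convection term by $\norm{\nabla u_n}_{L^2}\norm{\eta^h}_{L^4}^2$ and invoking Gagliardo--Nirenberg/Young does not close the argument, and it is not where Assumption \ref{assum:LowerBound} enters. In the paper one keeps the weighted coercive term from Lemma \ref{app:Monotone}, applies the \emph{inverse} H\"older inequality to extract $\norm{z_t}_{1,2/r}^2\bigl(1+\norm{u_t}_{1,\frac{2-p}{r-1}}^{2-p}+\norm{v_t}_{1,\frac{2-p}{r-1}}^{2-p}\bigr)^{-1}$ with $r$ chosen so that $\tfrac{2-p}{r-1}=\tfrac{2p}{2-p}$ (hence controlled by the $H^{2,p}$-norm), and after interpolation and Young obtains the Gronwall coefficient $(1+\norm{u_t}_{2,p}+\norm{v_t}_{2,p})^{\beta}$ with $\beta=\tfrac{2p}{3p-2-(2-p)^2}$. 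This absorption works for \emph{every} $p\in(1,2)$; the restriction $p>p^\ast$ is needed only so that $\beta<2p-2$, because the Gronwall factor $\EV{\exp\bigl[C\int_0^t(1+\norm{u_s}_{2,p})^{\beta}\,\ds\bigr]}$ can be bounded uniformly in $n$ only through the exponential moment estimate of Lemma \ref{lem:ExponentialEstimate}, which is available precisely for the exponent $2p-2$ (and whose proof for $p<2$ is itself delicate, using It\^o's formula for a concave function of $\norm{u}_V^2$). The condition $\beta<2p-2$ is equivalent to $p^3-8p^2+14p-6<0$ on $(1,2)$, i.e. $p>p^\ast$; note your sign of the cubic is reversed, which indicates the exponent bookkeeping was not actually carried out. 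Without the uniform exponential estimate of Lemma \ref{lem:ExponentialEstimate} and the comparison $\beta<2p-2$, the claim that ``the gradient bound follows by Gronwall'' is unsubstantiated.
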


The proofs of Theorem \ref{thm:Measure} and \ref{thm:MainTheorem} are contained in Section 3 and 4, respectively. The overall basis are finite dimensional Galerkin approximations of \eqref{eq:abstractSEE} w.\,r.\,t. the basis $(e_k)_{k \in \zstar}$. This is in contrast to e.\,g. a smooth approximation of the convection term in \cite{BarbuNSE1} and also favorable since \eqref{eq:abstractSEE} has no semi-linear structure. Moreover, we have chosen $\fcb$ as a suitable domain for $K$, hence an approximation in coordinates rather than smoothness seems canonical. The major part of Section 3 is devoted to the uniform exponential moment estimates for the invariant measures of the approximating problems. In Section 4 we prove pointwise gradient estimates for their transition semigroups which are particularly more involved than in the Navier-Stokes case. For a clear presentation we include an appendix with all necessary technical lemmas concerning the operators $\opA$ and $B$ that may be found in other references.

\section{Proof of Theorem \ref{thm:Measure}}
We prove Theorem \ref{thm:Measure} using finite dimensional Galerkin approximations. Thus, for $n \in \N$ set $H_n \df \spann \{e_k: k \in \zstar, \abs{k} \leq n\}$ as a finite dimensional subspace of $H$ by the canonical embedding $\iota_n$. Let $\pi_n: H \to H_n$ be the projection and define $\opAn(u) \df \pi_n \opA(\iota_n u)$, $B_n(u) \df \pi_n B(\iota_n u)$ for $u\in H_n$. Of course, let $Q_n \df \pi_n \circ Q \circ \iota_n$ and $W_n(t) \df \pi_n W(t)$. The approximation of \eqref{eq:abstractSEE} is then given by
\begin{empheq}[right=\quad\empheqrbrace]{equation}\label{eq:ApproxSDE}
\begin{aligned}
\mathrm{d} u_n(t) &= \Big( \opAn\big(u_n(t)\big) + B_n\big( u_n(t) \big) \Big) \dt + \sqrt{Q_n} \dwnt, & t \geq 0,\\
u(0) &= \pi_n u_0.
\end{aligned}
\end{empheq}
Standard results on SDEs imply the existence of a unique strong solution $\unt$ to \eqref{eq:ApproxSDE}, see e.\,g. \cite[Theorem 1.2]{KrylovKolmogorov}. Moreover, this solution satisfies some a priori estimates similar to the well-known energy inequality in the Navier-Stokes case, namely
\begin{equation}
\EV{\norm{\unt}_H^2 + \int_0^t \norm{\uns}_{1,p}^p \ds} \leq C \big( \norm{u_0}_H^2 + t \tr Q \big)
\end{equation}
with a constant $C$ independent of $n$, cf. \cite[Theorem 3.1.1]{TerasawaPLF}. Due to the invariance of the enstrophy in the two dimensional space periodic setting, see Lemma \ref{app:ConvectionInvariance}, it also holds that
\begin{equation}\label{eq:APrioriEstimate}
\EV{\norm{\unt}_V^2 + \int_0^t \Ip \big(\uns\big) \ds} \leq C \big( \norm{u_0}_V^2 + t \tr (-A) Q \big),
\end{equation}
see \cite[Lemma 3.2.2]{TerasawaPLF}. These estimates imply the existence of an invariant probability measure $\mu_n$ for \eqref{eq:ApproxSDE} using the Krylov-Bogoliubov method. In addition to that, the latter inequality is exploited to deduce the desired exponential moment estimates.
\begin{prop}\label{prop:ApproxMoment}
Let $\mu_n$ be any invariant measure of \eqref{eq:ApproxSDE}. Then, for any $0 < \eps < \eps^\ast$ there exists a finite constant $C(\eps)$ independent of $n$ such that
\[
\int \Ip(x) \mathrm{e}^{\eps \norm{x}_V^p} \dmu_n(x) < C(\eps).
\]
\end{prop}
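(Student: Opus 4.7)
I would apply It\^o's formula to the smoothed exponential Lyapunov functional
\[
V_\eps(u) \df \exp\bigl(\eps(1+\norm{u}_V^2)^{p/2}\bigr),
\]
which is comparable to $\mathrm{e}^{\eps\norm{u}_V^p}$ but remains smooth at the origin. The key observation is that, provided $\eps<\eps^\ast$, the Galerkin Kolmogorov operator $K_n$ applied to $V_\eps$ produces a strictly negative coefficient in front of $\Ip(u)$.

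Setting $\phi(u)=(1+\norm{u}_V^2)^{p/2}$, so that $V_\eps=\mathrm{e}^{\eps\phi}$ and $D\phi(u)=p(1+\norm{u}_V^2)^{(p-2)/2}(-A)u$ with the analogous expression for $D^2\phi$, one inserts these into $K_nV_\eps=V_\eps\bigl[\eps K_n\phi+\tfrac{\eps^2}{2}\scp{Q_nD\phi}{D\phi}_H\bigr]$ and uses
(i) the 2D periodic enstrophy invariance $\scp{B_n(u)}{-Au}_H=0$ (Lemma \ref{app:ConvectionInvariance}),
(ii) the ellipticity estimate $\scp{\opA(u)}{-Au}_H\leq -\nu_0(p-1)\Ip(u)$ coming from the $p$-monotonicity of $\mathbf{S}$,
(iii) the bounds $\scp{Q_n(-A)u}{(-A)u}_H\leq\norm{Q}_{L(V)}\norm{u}_V^2$ and $\tr(-A)Q_n\leq\tr(-A)Q<\infty$ granted by Assumption \ref{assum:Q},
and (iv) the fact that the contribution of $D^2\phi$ proportional to $p(p-2)<0$ is non-positive and hence discardable. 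The outcome is the pointwise estimate
\[
K_nV_\eps(u)\leq \eps p\,V_\eps(u)(1+\norm{u}_V^2)^{(p-2)/2}\Bigl[\tfrac12\tr(-A)Q - \nu_0(p-1)\Ip(u)+\tfrac{\eps p}{2}\norm{Q}_{L(V)}(1+\norm{u}_V^2)^{(p-2)/2}\norm{u}_V^2\Bigr].
\]
The trivial inequality $(1+\norm{u}_V^2)^{(p-2)/2}\norm{u}_V^2\leq\norm{u}_V^p$ together with \eqref{proof:EstimateIp}, which provides $\norm{u}_V^p\leq C_p\Ip(u)$, absorbs the diffusion correction into the $\Ip$-term; the resulting net coefficient of $\Ip(u)$ is $-\nu_0(p-1)+\tfrac{\eps p C_p}{2}\norm{Q}_{L(V)}$, strictly negative precisely for $\eps<\eps^\ast$. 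A routine case split on whether $\Ip(u)$ is large or small (using $\norm{u}_V^p\leq C_p\Ip(u)$ once more on the sublevel set $\{\Ip(u)\leq M\}$ to bound $V_\eps$ there) rearranges this to
\[
K_nV_\eps(u) + c_1\,\Ip(u)(1+\norm{u}_V^2)^{(p-2)/2}V_\eps(u)\leq C_1,
\]
with constants $c_1,C_1>0$ depending only on $\nu_0,p,C_p,\norm{Q}_{L(V)},\tr(-A)Q$, in particular independent of $n$.

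To turn this pointwise bound into the stated integral estimate I invoke the dynamical Krylov--Bogoliubov representation of $\mu_n$. Starting $u_n$ from some $u_0\in H_n$ and introducing stopping times $\tau_R=\inf\{t:\norm{u_n(t)}_V\geq R\}$, It\^o's formula applied to $V_\eps(u_n(t\wedge\tau_R))$ yields a genuine martingale; taking expectations, inserting the pointwise bound above, discarding the non-negative terminal term, and letting $R\to\infty$ by monotone convergence gives
\[
c_1\,\EV{\int_0^T \Ip(u_n(s))(1+\norm{u_n(s)}_V^2)^{(p-2)/2}V_\eps(u_n(s))\ds}\leq V_\eps(u_0)+C_1 T.
\]
Dividing by $T$ and letting $T\to\infty$ along the subsequence defining $\mu_n$, Fatou's lemma (applied first to bounded truncations of the integrand and then monotone convergence) produces the uniform bound $\int \Ip(u)(1+\norm{u}_V^2)^{(p-2)/2}V_\eps(u)\dmu_n\leq C_1/c_1$. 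Finally, to remove the prefactor $(1+\norm{u}_V^2)^{(p-2)/2}$ one picks any $\eps<\eps'<\eps^\ast$ and observes that $\mathrm{e}^{\eps\norm{u}_V^p}(1+\norm{u}_V^2)^{(2-p)/2}\leq C(\eps,\eps')\,\mathrm{e}^{\eps'(1+\norm{u}_V^2)^{p/2}}$, so that applying the previous estimate with $\eps'$ in place of $\eps$ yields $\int\Ip(u)\mathrm{e}^{\eps\norm{u}_V^p}\dmu_n<C(\eps)$, uniformly in $n$.

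The principal obstacle is the delicate balance between the linear-in-$\eps$ dissipation from $\eps\scp{\opA(u)}{-Au}_H$ and the quadratic-in-$\eps$ diffusion correction $\tfrac{\eps^2}{2}\scp{Q_nD\phi}{D\phi}_H$; inequality \eqref{proof:EstimateIp} is exactly what is required to close this balance, and the sharp threshold is pinned down by it to be $\eps^\ast$. The $n$-independence of all constants is automatic from $\norm{Q_n}_{L(V)}\leq\norm{Q}_{L(V)}$, $\tr(-A)Q_n\leq\tr(-A)Q$, and the fact that both the $p$-monotonicity of $\opA$ and the enstrophy invariance pass verbatim to the Galerkin projections.
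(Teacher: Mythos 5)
Your route is genuinely different from the paper's. The paper never differentiates an exponential: it applies It\^o's formula to the polynomial functionals $(1+\norm{u}_V^2)^{\frac{p}{2}k+1}$, uses $\scp{\opAn(u)+B_n(u)}{-Au}_H\leq-\nu_0(p-1)\Ip(u)$ and $\norm{u}_V^p\leq C_p\Ip(u)$ to derive the recursive moment inequality \eqref{proof:BasicMomentFormula}, and then sums the series $\sum_k\frac{\eps^k}{k!}\int\Ip(x)(1+\norm{x}_V^2)^{\frac{p}{2}k}\dmu_n$, the threshold $\eps^\ast$ emerging from a ratio test on that series. You instead compute $K_n$ on the exponential Lyapunov function $V_\eps$ directly and obtain a drift condition whose net coefficient of $\Ip$ is $-\nu_0(p-1)+\tfrac{\eps pC_p}{2}\norm{Q}_{L(V)}$; this reproduces exactly the same threshold $\eps^\ast$ in one stroke and is arguably more transparent about where it comes from (linear dissipation versus quadratic-in-$\eps$ noise correction). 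Your pointwise computation is correct: the enstrophy invariance, the estimate of Lemma \ref{app:TestStokes}, the sign of the $p(p-2)$ Hessian contribution, and the absorption of $(1+\norm{u}_V^2)^{\frac{p-2}{2}}\norm{u}_V^2\leq\norm{u}_V^p\leq C_p\Ip(u)$ all go through on the Galerkin spaces, and the constants are $n$-independent for the reasons you give. (Minor point: the relevant cancellation is $\scp{B(u)}{-Au}_H=0$ from Lemma \ref{app:ConvectionStokes}, and the noise quadratic form is really $\norm{\sqrt{Q_n}(-A)u}_H^2$, which both you and the paper bound by $\norm{Q}_{L(V)}\norm{u}_V^2$; this only affects the bookkeeping of the constant in $\eps^\ast$, not the structure.)

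The genuine gap is in the passage from the pointwise drift condition to the integral bound. The proposition asserts the estimate for \emph{any} invariant measure $\mu_n$ of \eqref{eq:ApproxSDE}, but your argument averages the Dynkin formula over $[0,T]$ from a \emph{fixed} initial datum $u_0$ and lets $T\to\infty$ ``along the subsequence defining $\mu_n$''; this only produces the bound for the particular Krylov--Bogoliubov limit measure, not for an arbitrary invariant measure (the noise may be degenerate, so uniqueness of $\mu_n$ is not available). Starting instead from stationarity, $u_0\sim\mu_n$, does not immediately repair this because the initial term $\EV{V_\eps(u_0)}=\int V_\eps\,\dmu_n$ is not known to be finite — which is precisely what is being proved. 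The standard fix, and the analogue of the paper's ``standard cut-off procedure'', is to truncate the Lyapunov function rather than time: take concave $\theta_R$ with $\theta_R(s)=s$ for $s\leq R$, $\theta_R$ constant for large $s$, $0\leq\theta_R'\leq1$, $\theta_R''\leq0$. Then $\theta_R\circ V_\eps$ is bounded and constant outside a bounded set of $H_n$, so $\int K_n(\theta_R\circ V_\eps)\,\dmu_n=0$ for every invariant measure, while concavity gives $K_n(\theta_R\circ V_\eps)\leq\theta_R'(V_\eps)\,K_nV_\eps\leq\theta_R'(V_\eps)\bigl(C_1-c_1\Ip(u)(1+\norm{u}_V^2)^{\frac{p-2}{2}}V_\eps(u)\bigr)$; integrating, rearranging, and letting $R\to\infty$ by Fatou yields the uniform bound for arbitrary $\mu_n$. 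With this replacement (and your final comparison removing the prefactor, which is fine), the proof is complete. Note that the paper's own recursion sidesteps this issue automatically, since its $k=0$ step only uses that $\mu_n$ is a probability measure and then bootstraps integrability of the higher polynomial moments.
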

\begin{proof}
Let $u_t = \unt$ in the course of the proof and It\^o's formula for $(1 + \norm{u_t}_V^2)^m$, $m \geq 1$ implies
\begin{align*}
\big(1 + \norm{u_T}_V^2\big)^m &= \big(1 + \norm{x}_V^2\big)^m + m \int_0^T \big(1 + \norm{u_t}_V^2\big)^{m-1} \norm{\sqrt{Q_n}}_{L_2(H,V)} \dt\\
&\quad+ 2m \int_0^T \big(1 + \norm{u_t}_V^2\big)^{m-1} \scp{\opAn(u_t) + B_n(u_t)}{(-A) u_t}_H \dt\\
&\quad+ 2m (m-1) \int_0^T \big(1 + \norm{u_t}_V^2\big)^{m-2} \scp{Q_n u_t}{u_t}_V \dt\\
&\quad+ 2m \int_0^T \big(1 + \norm{u_t}_V^2\big)^{m-1} \scp{u_t}{\sqrt{Q_n} \dwnt}_V.
\end{align*}
The essential properties of the nonlinear drift can be found in Lemmas \ref{app:TestStokes} and \ref{app:ConvectionInvariance}. We can conclude that
\[
\scp{\opAn(u_t) + B_n(u_t)}{(-A) u_t}_H \leq - \nu_0 (p-1) \Ip (u_t).
\]
This nonlinear functional $\Ip$ can be estimated in terms of the $V$-norm, namely
\begin{equation}\label{proof:EstimateIp}
\norm{u_t}_V^p \leq C_p \Ip(u_t),
\end{equation}
by Lemma \ref{app:EstIp} and the embeddings $H^{2,p} \hookrightarrow V \hookrightarrow H^{1,p}$. In the following, we derive a recursive formula for monomial moments of $\mu_n$. The latter inequality suggests that the steps in the exponent are of size $p$, hence it is reasonable to expect that $\exp [ \eps \norm{x}_V^p ]$ is $\mu_n$-integrable, i.\,e. a sub-Gaussian decay. This motivates the choice $m = p/2\, k + 1$ for $k \in \N_0$.
With a standard cut-off procedure it follows that
\begin{equation}\label{proof:BasicMomentFormula}
\begin{split}
&2 \nu_0 (p-1) \int \Ip(x) \big(1 + \norm{x}_V^2\big)^{\frac{p}{2}k} \dmu_n(x)\\
&\quad\leq \bigl(\tr (-A) Q + pk\norm{Q}_{L(V)} \bigr) \int \big(1 + \norm{x}_V^2\big)^{\frac{p}{2}k} \dmu_n(x).
\end{split}
\end{equation}
The choice $k=0$ implies $\int \Ip(x) \dmu_n(x) < \infty$, hence for $k\geq 1$ use \eqref{proof:EstimateIp} to introduce $\Ip$ to the right hand side. We obtain
\begin{align*}
&\int \Ip(x) \big(1 + \norm{x}_V^2\big)^{\frac{p}{2}k} \dmu_n(x)\\
&\quad\leq \frac{C_p\bigl(\tr (-A) Q + pk\norm{Q}_{L(V)} \bigr)}{2\nu_0 (p-1)}  \int \Ip(x) \big(1 + \norm{x}_V^2\big)^{\frac{p}{2}(k-1)} \dmu_n(x).
\end{align*}
In particular, $\int \Ip(x)(1 + \norm{x}_V^2)^{\frac{p}{2}k} \dmu_n(x) \leq C(k)$ for all $k \in \N_0$ by an iteration of the inequality above. Now choose $\eps < \eps' < \eps^\ast$ such that for every $k > K_\eps$ (such an $K_\eps$ exists and is finite) it holds that
\[
\eps \frac{C_p\bigl(\tr (-A) Q + pk\norm{Q}_{L(V)} \bigr)}{2\nu_0 (p-1)} \leq  \frac{p \cdot C_p \norm{Q}_{L(V)}}{2 \nu_0 (p-1)} \eps' k = \frac{\eps'}{\eps^\ast} k
\]
by definition of $\eps^\ast$. For all $k > K_\eps$ it follows that
\[
\frac{\eps^k}{k!} \int \Ip(x) \big(1 + \norm{x}_V^2\big)^{\frac{p}{2}k} \dmu_n(x) \leq \left(\frac{\eps'}{\eps^\ast}\right)^{k-K_\eps} \frac{\eps^{K_\eps}}{K_\eps!} C(K_\eps),
\]
hence for arbitrary $K > K_\eps$ the sum
\[
\sum_{k=0}^K \frac{\eps^k}{k!} \int \Ip(x) \big(1 + \norm{x}_V^2\big)^{\frac{p}{2}k} \dmu_n(x) \leq \frac{\eps^\ast}{\eps^\ast-\eps'} \sum_{k=0}^{K_\eps} \frac{\eps^{K_\eps}}{K_\eps!} C(K_\eps)
\]
can be bounded independent of $K$ and the infinite series (as $K \to \infty$) is bounded by the same value. Note that obviously $(1+ x^2)^\frac{p}{2} > x^p$, thus
\[
\int \Ip(x) \mathrm{e}^{\eps \norm{x}_V^p} \dmu_n(x) \leq \frac{\eps^\ast}{\eps^\ast-\eps'} \sum_{k=0}^{K_\eps} \frac{\eps^{K_\eps}}{K_\eps!} C(K_\eps) \fd C(\eps). \qedhere
\]
\end{proof}
\begin{rem}
The result above generalizes to $p \geq 2$. In contrary to the case $p<2$ which lowers the decay rate, the case $p\geq 2$ simply results in a Gaussian decay. This is due to the estimate $\Ip(x) \geq C_p \norm{x}_{2,2}^2$ which is optimal, see \cite[Lemma 5.3.24]{NecasBook}. The moment estimate follows in the same line as above.
\end{rem}
\begin{proof}[Proof of Theorem \ref{thm:Measure}]
For every $n \in \N$, consider $\mu_n \in \mathcal{M}_1(H_n)$ as a probability measure $\tilde{\mu}_n$ on H by the canonical inclusion. The uniform estimate in Proposition \ref{prop:ApproxMoment} in particular guarantees that $\int \norm{x}_V^2 \, \mathrm{d} \tilde{\mu}_n(x) \leq C$ uniformly in $n$. Since the embedding $V \hookrightarrow H$ is compact, the family of measures $(\tilde{\mu}_n)_n$ is tight on $H$ by Prokhorov's theorem. Thus, we can extract a weakly converging subsequence $\tilde{\mu}_{n_k} \to \mu$. In particular, we can extend all moment estimates to the limit measure via a cut-off and Fatou's lemma. It remains to show the infinitesimal invariance. Fix $\phi \in \fcb$ and $n$ large enough such that $\phi$ only depends on coordinates $e_k, k \leq n$. Now consider the Kolmogorov operator associated with the approximating equation \eqref{eq:ApproxSDE} given by
\[
\bigl(K_n \phi\bigr) (x)\df \frac12 \tr \big(Q_n D^2\phi (x)\big) + \scp{\opAn(x) + B_n(x)}{D\phi (x)}_H.
\]
Then, the invariance of $\tilde{\mu}_n$ implies $\int (K_n \phi) (x) \,\mathrm{d}\tilde{\mu}_n(x) = 0$. Moreover, 
\[
\abs{\big(K_n \phi\big) (x)} \leq \frac12 \abs{ \tr \big(Q_n D^2 \phi (x)\big)} + \norm{\opAn(x) + B_n(x)}_H \norm{D\phi(x)}_H
\]
by the Cauchy-Schwarz inequality. The operator norms are estimated in Lemma \ref{app:NormA} and \ref{app:NormB}, more precisely
\[
\norm{\opAn(x) + B_n(x)}_H \leq C( 1 + \norm{x}_V)^{\frac{4-p}{2}} \Ip(x)^{\frac12}.
\]
The uniform moment estimates of $\tilde{\mu}_n$ now imply the convergence
\[
\lim_{k \to \infty} \int \big(K\phi\big)(x) \,\mathrm{d}\tilde{\mu}_{n_k}(x) = \int \big(K\phi\big)(x) \dmu(x).
\]
In the same way we can use this estimate for
\[
\abs{\big(( K - K_{n_k}) \phi\big) (x) } \leq \bigl( \norm{\opA(x) - \opAn(x)}_H + \norm{B(x)-B_n(x)}_H \bigr) \norm{D\phi(x)}_H
\]
and conclude
\[
\lim_{k \to \infty} \int \big((K - K_{n_k}) \phi\big)(x) \,\mathrm{d}\tilde{\mu}_{n_k}(x) = 0.
\]
All three equations combined yield $\int K \phi(x) \dmu(x) = 0$.
\end{proof}
\section{Proof of Theorem \ref{thm:MainTheorem}}
As described above, a major ingredient for the proof of such a theorem is a pointwise gradient estimate for the transition semigroup $(P_t^n)_{t \geq 0}$ induced by the unique strong solution of the approximating problem \eqref{eq:ApproxSDE}. Recall that
\[
\bigl(P_t^n \phi\bigr) (x) \df \EV{\phi \big(\unt \big)}, \quad x \in H, \phi \in \mathcal{B}_b(H_n).
\]
Regularity properties of the transition semigroup correspond to a regular dependence of $\unt$ on the initial value $x$, i.\,e. the behavior of the difference of two solutions corresponding to two different initial values. This can be illustrated by
\begin{equation}\label{eq:EstDiffTransSemi}
\abs{P_t^n \phi(x) - P_t^n \phi(y)} \leq \EV{ \abs{\phi \big(u_n(t,x)\big) - \phi \big(u_n(t,y)\big)}} \leq \norm{\phi}_{\lip} \EV{ \norm{u_n(t,x) - u_n(t,y)}_H}
\end{equation}
for $\phi \in \lip(H_n)$. Thus, the time evolution of $z_t \df u_t - v_t$, $u_t \df u_n(t,x)$ and $v_t \df u_n(t,y)$, is the crucial point. It\^o's formula implies
\begin{equation}\label{eq:DifferenceProcess}
\mathrm{d} \norm{z_t}_H^2 = \scp{\opAn(u_t) - \opAn(v_t) + B_n(u_t) - B_n(v_t)}{z_t}_H \dt
\end{equation}
and the noise term cancels due to the fact that we consider strong solutions corresponding to the same noise realization, i.\,e. the same Brownian motion $W_n(t)$. The more or less standard procedure is to use \eqref{eq:DifferenceProcess} to establish a Gronwall argument. Due to the structure of the nonlinear drift that is in particular only locally monotone, such an argument involves some uniform (in $n$) exponential a priori estimates on the solution to \eqref{eq:ApproxSDE}. The first lemma concerns such an estimate that is relying on the invariance of the enstrophy similar to Proposition \ref{prop:ApproxMoment}.
\begin{lem}\label{lem:ExponentialEstimate}
There exists $\delta^\ast> 0$ inverse proportional to $\norm{\sqrt{Q}}_{L(V^\ast,H)}^2$ such that for all $0 < \delta \leq \delta^\ast$ and all $t \geq 0$ it holds that
\[
\EV{\exp \Bigl[ \delta \int_0^t (1 + \norm{\uns}_{2,p})^{2p-2} \ds \Bigr]} \leq \exp \Bigl[ \delta C \big(1 + \norm{u_0}_V^p + t \tr (-A)Q\big) \Bigr]
\]
with a finite constant $C$ independent of $n$.
\end{lem}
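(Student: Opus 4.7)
The plan is a Lyapunov / exponential-martingale argument in the spirit of Proposition \ref{prop:ApproxMoment}, but upgrading polynomial moments to exponential ones. I would apply It\^o's formula to $F(u)\df\exp[\delta(1+\norm{u}_V^2)^{p/2}]$ along the Galerkin process $\unt$, with $\delta$ chosen so small that the martingale's quadratic variation, which by Assumption \ref{assum:Q} satisfies $d\langle M\rangle_t\leq 4\norm{\sqrt{Q}}_{L(V^\ast,H)}^2\norm{\unt}_V^2\dt$, is dominated by the dissipative drift produced by Lemmas \ref{app:TestStokes} and \ref{app:ConvectionInvariance}. Concretely, combining It\^o with those lemmas and dropping the non-positive concave-second-derivative contribution (available since $p<2$) yields
\begin{equation*}
\begin{aligned}
dF(\unt)\leq F(\unt)\Bigl\{&-\delta p\nu_0(p-1)(1+\norm{\unt}_V^2)^{\frac{p}{2}-1}\Ip(\unt) + \tfrac{\delta p}{2}\tr(-A)Q\\
&\quad+\tfrac{\delta^2 p^2}{2}\norm{\sqrt{Q}}_{L(V^\ast,H)}^2(1+\norm{\unt}_V^2)^{p-1}\Bigr\}\dt + dN_t,
\end{aligned}
\end{equation*}
with $N_t$ a local martingale.

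The key analytic input consists of two Young-type inequalities:
\begin{align*}
(1+\norm{u}_V^2)^{p-1}&\leq c_1(1+\norm{u}_V^2)^{\frac{p}{2}-1}\Ip(u)+c_2,\\
(1+\norm{u}_{2,p})^{2p-2}&\leq c_3(1+\norm{u}_V^2)^{\frac{p}{2}-1}\Ip(u)+c_4.
\end{align*}
The first is immediate from $\norm{u}_V^p\leq C_p\Ip(u)$ (cf.\ \eqref{proof:EstimateIp}) by a case distinction $\norm{u}_V^2\lessgtr 1$. The second follows from an H\"older estimate on the definition of $\Ip$ giving $\norm{u}_{2,p}^p\leq C\Ip(u)^{p/2}(1+\norm{u}_V^2)^{p(2-p)/4}$ (via Lemma \ref{app:EstIp} and a Korn-type bound), raised to the power $2(p-1)/p$ and processed by Young with exponents $1/(p-1)$, $1/(2-p)$. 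Setting $\delta^\ast\df \nu_0(p-1)/\bigl(p\,c_1\,\norm{\sqrt{Q}}_{L(V^\ast,H)}^2\bigr)$, which has the required $\norm{\sqrt{Q}}_{L(V^\ast,H)}^{-2}$ scaling, the first inequality absorbs the quadratic-in-$\delta$ term into half of the dissipative one, and the second then yields
\[
dF(\unt)\leq F(\unt)\bigl[-\delta c_5(1+\norm{\unt}_{2,p})^{2p-2}+\delta C_6\bigr]\dt+dN_t
\]
for suitable $c_5\df \tfrac{p\nu_0(p-1)}{2c_3}$ and $C_6$ independent of $n$ and proportional to $1+\tr(-A)Q$.

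To conclude, set $E_t\df F(\unt)\exp\bigl[\delta c_5\int_0^t(1+\norm{\uns}_{2,p})^{2p-2}\ds\bigr]$. By It\^o's product rule the $\pm\delta c_5(1+\norm{u}_{2,p})^{2p-2}F$ contributions cancel exactly, so $E_t\mathrm{e}^{-\delta C_6 t}$ is a local supermartingale. Localizing via $\tau_R\df\inf\{t:\norm{\unt}_V\geq R\}$, which diverges a.s.\ thanks to \eqref{eq:APrioriEstimate}, and using Fatou as $R\to\infty$ yields $\EV{E_t}\leq\exp\bigl[\delta(1+\norm{u_0}_V^2)^{p/2}+\delta C_6 t\bigr]$. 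Since $F\geq 1$ and $(1+\norm{u_0}_V^2)^{p/2}\leq 1+\norm{u_0}_V^p$ by concavity, relabeling $\tilde\delta\df\delta c_5$ produces precisely the stated estimate, with modified threshold $c_5\delta^\ast$ still inverse proportional to $\norm{\sqrt{Q}}_{L(V^\ast,H)}^2$.

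I expect the main obstacle to be the second Young-type inequality, i.e., the bound $(1+\norm{u}_{2,p})^{2p-2}\leq c_3(1+\norm{u}_V^2)^{p/2-1}\Ip(u)+c_4$. It requires a careful interpolation between the $H^{2,p}$-norm and the non-standard $p$-dissipation $\Ip$, and it is precisely this compatibility that dictates the otherwise peculiar exponent $2p-2$ in the statement.
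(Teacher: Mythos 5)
Your proposal is correct, and it reaches the stated estimate by a route that differs in execution from the paper's, though it rests on the same key scaling observation. In fact the functional is identical: the paper applies It\^o's formula to $f(\norm{u_t}_V^2)$ with $f(x)=(1-\lambda)^{-1}(1+x)^{1-\lambda}$ and then fixes $2\lambda=2-p$, which is exactly your $(1+\norm{u}_V^2)^{p/2}$ up to the factor $2/p$, and both arguments hinge on the two facts you isolate, namely that the dissipation $(1+\norm{u}_V^2)^{\frac{p}{2}-1}\Ip(u)$ dominates $(1+\norm{u}_{2,p})^{2p-2}$ (Lemma \ref{app:EstIp} plus the embeddings $H^{2,p}\hookrightarrow V\hookrightarrow H^{1,p}$) and that the quadratic variation of the noise term is controlled by the very same quantity, with $\norm{\sqrt{Q}}^2_{L(V^\ast,H)}$ as the proportionality constant, which is what makes $\delta^\ast$ scale like $\norm{\sqrt{Q}}^{-2}_{L(V^\ast,H)}$ in both proofs. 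The difference is where the exponential enters: the paper first derives a pathwise bound for $\delta\int_0^t(1+\norm{u_s}_{2,p})^{2p-2}\ds$ with a local martingale $M_n$ on the right, exponentiates, and controls $\EV{\exp[\delta C M_n(t)]}$ via Cauchy--Schwarz against the exponential supermartingale $\exp[\alpha M_t-\tfrac{\alpha^2}{2}\langle M\rangle_t]$, leading to a self-referential inequality of the type $\EV{X}^2\le A+\EV{X}$ that is closed after localizing with a stopping time so that the right-hand side is a priori finite; you instead apply It\^o directly to the exponential Lyapunov function and absorb the $\delta^2$-correction pathwise into half of the dissipation (your first Young-type inequality), so that $E_t\mathrm{e}^{-\delta C_6 t}$ becomes a nonnegative local, hence by Fatou a true, supermartingale. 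Your version avoids the squaring and the absorption at the level of expectations and is arguably cleaner; the paper's version avoids the It\^o computation for the exponential itself. Two harmless remarks: your second Young-type inequality indeed needs your first one (or the equivalent case distinction) to dispose of the leftover factor $(1+\norm{u}_V^2)^{p-1}$, as your sketch implicitly does; and, exactly as in the paper's own proof, the additive constants $c_2,c_4$ produce a contribution $\delta Ct$ that is not literally proportional to $t\tr(-A)Q$, which is immaterial since $C$ is only required to be independent of $n$.
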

\begin{proof}
Such an a priori estimate is essentially based on It\^o's formula applied to $\norm{u_t}_V^2$, cf. Proposition \ref{prop:ApproxMoment}, that yields an inequality involving a local martingale. In the exponential this clearly does not vanish after taking the expectation. However, recall that for a continuous local martingale $M$ with $M_0=0$ the processes
\[
Z_t^\alpha \df \exp \Big[ \alpha M_t - \tfrac{\alpha^2}{2}\langle M \rangle_t \Big]
\]
for $\alpha >0$ are again continuous local martingales w.\,r.\,t. the same filtration. Such a local martingale is always a supermartingale due to Fatou's Lemma, in particular $\EV{Z^\alpha_0} = 1$, hence $\EV{Z^\alpha_t} \leq 1$ for all $t\geq 0$ and $\alpha >0$. This suggests the following strategy.
\begin{equation}\label{proof:Martingale2}
\begin{split}
\EV{\exp \bigl[ \tfrac{\alpha}{2} M_t \bigr]} &= \EV{\exp \bigl[ \tfrac{\alpha}{2} M_t - \tfrac{\alpha^2}{4} \langle M \rangle_t \bigr] \exp \bigl[\tfrac{\alpha^2}{4} \langle M\rangle_t \bigr]}\\
&\leq \EV{ \exp \bigl[ \alpha M_t - \tfrac{\alpha^2}{2} \langle M\rangle_t \bigr]}^{\frac12} \EV{\exp \bigl[\tfrac{\alpha^2}{2}\langle M\rangle_t \bigr]}^\frac12 \leq \EV{\exp \bigl[\tfrac{\alpha^2}{2}\langle M\rangle_t \bigr]}^\frac12.
\end{split}
\end{equation}

In our case, $M_t$ is a stochastic integral whose quadratic variation is always a $L^2$-norm in time. This is not quite suitable for the case $p<2$ since the left hand side is some $L^p$-norm in time, cf. \eqref{proof:EstimateIp}. In order to achieve an estimate anyway, we use It\^o's formula for $f(\norm{u_t}_V^2)$ with the concave function $f(x) = (1-\lambda)^{-1} (1+x)^{1-\lambda}$ for $\lambda \in (0,1)$.
\begin{equation}\label{proof:Itof}
\begin{split}
f\big( \norm{u_t}_V^2 \big) &\leq f \big( \norm{x}_V^2\big) - 2 \nu_0 (p-1)\int_0^t \frac{\Ip(u_t)}{\big(1 + \norm{u_t}_V^2\big)^\lambda} \dt\\
&\quad+ 2 \int_0^t \frac{\scp{u_t}{\sqrt{Q_n} \dwns}_V}{\big(1 + \norm{u_t}_V^2\big)^\lambda} + \int_0^t \frac{\tr (-A) Q}{\big(1 + \norm{u_t}_V^2\big)^\lambda} \dt.
\end{split}
\end{equation}
Why does that give any advantage over the usual formula for the square? While on the left hand side we have morally an exponent $p-2\lambda$, the quadratic variation of the local martingale admits the exponent $2-4\lambda$ due to the square. Notice that for $2\lambda \geq 2-p$ the former is larger, thus we can estimate the quadratic variation by the left hand side. Note that by Lemma \ref{app:EstIp}, $\Ip$ is directly related to an $L^p$-norm of the second derivative so that the continuous embeddings $H^{2,p} \hookrightarrow V \hookrightarrow H^{1,p}$, $p \in (1,2)$ imply
\begin{equation}\label{proof:EstimateIp2}
\frac{\Ip(u_t)}{\bigl(1 + \norm{u_t}_V^2\bigr)^\lambda} \geq C (1 + \norm{u_t}_{2,p})^{p-2\lambda} - C
\end{equation}
with a constant $C$ composed of the embedding constants only depending on $p$. In the following $C$ may change from line to line. With inequalities \eqref{proof:Itof} and \eqref{proof:EstimateIp2} and some obvious estimates follows
\[
\delta \int_0^t \big(1 + \norm{u_s}_{2,p}\big)^{p-2\lambda} \ds\leq \delta C \bigl(1 + \norm{x}_V^2\bigr)^{1-\lambda} + \delta \bigl( 1 + C \tr (-A) Q\bigr) t + \delta C M_n(t).
\]
Now introduce the stopping time $\tau_M \df \{ \inf t \geq 0: \int_0^t \Ip(u_s) \ds \geq M\}$. Clearly, $\tau_M \to \infty$ as $M\to \infty$ by \eqref{eq:APrioriEstimate}. An upper bound for the quadratic variation of the local martingale $M_n(t)$ is given by
\begin{equation}\label{proof:QuadVar}
\begin{split}
\langle M_n\rangle_t &= \int_0^t \frac{\norm{\sqrt{Q_n}(-A) u_s}_H^2}{\bigl(1 + \norm{u_s}_V^2\bigr)^{2\lambda}} \ds \leq \norm{\sqrt{Q}}^2_{L(V^\ast, H)} \int_0^t (1 + \norm{u_s}_V)^{2-4\lambda}\ds\\
&\leq C \norm{\sqrt{Q}}^2_{L(V^\ast, H)} \int_0^t (1 + \norm{u_s}_{2,p})^{2-4\lambda} \ds,
\end{split}
\end{equation}
where we used the embedding $H^{2,p} \hookrightarrow V$ again. With the choice $2 \lambda = 2 -  p$, the bound on the quadratic variation is exactly given by the lower bound for $\Ip$, hence with \eqref{proof:Martingale2}, \eqref{proof:EstimateIp2} and \eqref{proof:QuadVar} we have shown
\begin{align*}
\EV{\exp \Big[\delta \int_0^{t \wedge \tau_M} \big(1 + \norm{u_s}_{2,p}\big)^{2p-2} \ds\Big]}^2 &\leq \exp \Big[ \delta C \big( 1 + \norm{x}_V^p + (t \wedge \tau_M) \tr (-A) Q\Big)\Big]\\
&\quad+ \EV{\exp \Big[ \delta^2 (\delta^\ast)^{-1} \int_0^{t \wedge \tau_M} \big(1 + \norm{u_s}_{2,p}\big)^{2p-2} \ds\Big]}
\end{align*}
The right hand side is finite, thus the left hand side can absorb it provided $\delta \leq \delta^\ast$. Fatou's lemma yields the conclusion.
\end{proof}
We are now in the position to prove a pointwise gradient estimate for the transition semigroup $P_t^n$. It is based on the monotone behavior of the extra stress tensor, and thus the operator $\opA$, shown in Lemma \ref{app:Monotone}. Note that for $p<2$ this property is essentially weaker than in the case $p= 2$, the Navier-Stokes equations.
\begin{prop}\label{prop:EstTransSemi}
Suppose Assumption \ref{assum:LowerBound} holds. Then, for any $\phi \in \lip (H_n)$, $x \in H_n$, $t > 0$ and arbitrary $\delta > 0$ there exists a constant $C(\delta, Q, p, \nu_0)$ independent of $n$ such that the gradient of the transition semigroup $P_t^n$ satisfies
\begin{equation}\label{eq:EstGradTransSemi2}
\norm{D P_t^n \phi (x)}_H \leq \norm{\phi}_{\lip} \mathrm{e}^{\delta \norm{x}_V^p + C(\delta, Q, p, \nu_0)t}.
\end{equation}
\end{prop}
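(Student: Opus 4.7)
The plan is to reduce the gradient bound to a pathwise estimate on the difference process. By \eqref{eq:EstDiffTransSemi} and Jensen it suffices to prove
\[
\EV{\norm{z_t}_H^2} \leq \norm{x-y}_H^2 \exp\bigl[2\delta \norm{x}_V^p + 2Ct\bigr]
\]
for $z_t \df u_n(t,x)-u_n(t,y)$, and then let $y \to x$ to extract the Fr\'echet derivative at $x$.

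Thanks to \eqref{eq:DifferenceProcess} the noise terms cancel and, setting $u_t \df u_n(t,x)$, $v_t \df u_n(t,y)$,
\[
\tfrac{d}{dt}\norm{z_t}_H^2 = 2\scp{\opAn(u_t)-\opAn(v_t)}{z_t}_H + 2\scp{B_n(u_t)-B_n(v_t)}{z_t}_H.
\]
The first term is handled by the monotonicity of $\opA$ from Lemma \ref{app:Monotone}, producing a negative dissipative quantity pointwise weighted by $(1+\abs{\mathbf{E}u_t}^2+\abs{\mathbf{E}v_t}^2)^{(p-2)/2}$. For the convection term I use the decomposition $B_n(u_t)-B_n(v_t) = B_n(z_t,u_t)+B_n(v_t,z_t)$ together with the standard antisymmetry $\scp{B_n(v_t,z_t)}{z_t}_H=0$, leaving only $\scp{B_n(z_t,u_t)}{z_t}_H$ to estimate. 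A 2D Gagliardo--Nirenberg / Ladyzhenskaya type interpolation between $H$, $V$, and $H^{2,p}$ controls this by a product of powers of $\norm{z_t}_H$, $\norm{z_t}_V$, and $\norm{u_t}_{2,p}$.

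The delicate step is the absorption of the $\norm{z_t}_V^{\cdot}$ factor into the dissipative term. Because $p<2$, the weight $(1+\abs{\mathbf{E}u}+\abs{\mathbf{E}v})^{p-2}$ degenerates on large shear rates, and so a Hölder inequality must first be used to trade that weight for additional powers of $(1+\norm{u_t}_{2,p})$, $(1+\norm{v_t}_{2,p})$ before Young's inequality can perform the absorption. After a double Young-balancing the aim is
\[
\tfrac{d}{dt}\norm{z_t}_H^2 \leq C\bigl(1+\norm{u_t}_{2,p}+\norm{v_t}_{2,p}\bigr)^{2p-2}\norm{z_t}_H^2,
\]
since $2p-2$ is precisely the exponent that Lemma \ref{lem:ExponentialEstimate} can accommodate inside an exponential moment. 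Assumption \ref{assum:LowerBound} is exactly what allows the interpolation and Young parameters to be chosen so that the surviving power of $\norm{u_t}_{2,p}$ does not exceed $2p-2$; the cubic $p^3 - 8p^2 + 14p - 6$ emerges once these parameters are eliminated from the resulting algebraic constraints, and the arbitrariness of $\delta>0$ is achieved by paying a smaller Young coefficient in front of the Gronwall integrand at the cost of a larger constant $C(\delta, Q, p, \nu_0)$ in the $t$-linear term.

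Granted this pointwise inequality, pathwise Gronwall yields $\norm{z_t}_H^2 \leq \norm{z_0}_H^2 \exp\bigl[C \int_0^t (1+\norm{u_s}_{2,p}+\norm{v_s}_{2,p})^{2p-2}\,ds\bigr]$. Taking expectations, a Cauchy--Schwarz splitting of the exponential into $u$ and $v$ pieces and two applications of Lemma \ref{lem:ExponentialEstimate} produce the desired factor $\mathrm{e}^{\delta\norm{x}_V^p + Ct}$, using that for $y$ in a neighborhood of $x$ one has $\norm{y}_V^p \leq \norm{x}_V^p + O(1)$, which is harmless in the limit $y \to x$. The main obstacle throughout is the weighted convection estimate: the degeneracy of the $\opA$-dissipation for $p<2$ is much weaker than the uniform dissipation of the Stokes operator in the Navier--Stokes case, and it is exactly in this weighted balance that the sharp threshold $p^\ast$ appears.
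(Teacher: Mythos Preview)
Your overall architecture matches the paper: reduce to the difference process via \eqref{eq:EstDiffTransSemi}, use the monotonicity of $\opA$ for dissipation, estimate the convection term, absorb by Young, apply Gronwall, and finish with Lemma~\ref{lem:ExponentialEstimate}. The place where the proposal breaks is the choice of norm carrying the $z_t$-gradient information. You claim that Gagliardo--Nirenberg interpolation controls the convection term by powers of $\norm{z_t}_H$, $\norm{z_t}_V$ and $\norm{u_t}_{2,p}$, and that the $\norm{z_t}_V$ factor can then be absorbed into the dissipative term after ``a H\"older inequality \dots\ to trade that weight for additional powers of $(1+\norm{u_t}_{2,p}), (1+\norm{v_t}_{2,p})$''. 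This trade cannot produce $\norm{z_t}_V^2$: bounding
\[
\int_{\To^2}\bigl(1+\abs{\mathbf{E}u_t}^2+\abs{\mathbf{E}v_t}^2\bigr)^{\frac{p-2}{2}}\abs{\mathbf{E}z_t}^2\,\mathrm{d}\xi
\]
from below by $c\norm{z_t}_V^2$ times any finite negative power of $u_t,v_t$-norms would force the compensating factor to involve $\norm{\nabla u_t}_{L^\infty}$ (this is the limiting case $r\to 1$ of the reverse H\"older below), and $H^{2,p}\not\hookrightarrow W^{1,\infty}$ in two dimensions for $p<2$.

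The paper avoids this by applying the \emph{inverse} H\"older inequality with a strictly sub-critical parameter $r=1+(2-p)^2/(2p)>1$, which yields
\[
J_t \leq -C_1\,\norm{z_t}_{1,\frac{2}{r}}^2\Bigl(1+\norm{u_t}_{1,\frac{2p}{2-p}}^{2-p}+\norm{v_t}_{1,\frac{2p}{2-p}}^{2-p}\Bigr)^{-1},
\]
i.e.\ only the weaker norm $\norm{z_t}_{1,2/r}$ with $2/r<2$. The convection term is estimated by $\tfrac12\norm{z_t}_{0,q}^2\bigl(1+\norm{u_t}_{1,2p/(2-p)}+\norm{v_t}_{1,2p/(2-p)}\bigr)$ with $q=4p/(3p-2)$, and $\norm{z_t}_{0,q}$ is interpolated between $L^2$ and $L^{q^\ast}$, $q^\ast=4p/(2-p)^2$, using the embedding $H^{1,2/r}\hookrightarrow L^{q^\ast}$ --- not between $H$ and $V$. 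Only with this weaker gradient norm on $z_t$ does the absorption go through, and the exponent that remains on $(1+\norm{u_t}_{2,p}+\norm{v_t}_{2,p})$ is the explicit $\beta=2p/\bigl(3p-2-(2-p)^2\bigr)$. Assumption~\ref{assum:LowerBound} is exactly $\beta<2p-2$, after which Young turns $C(\,\cdot\,)^\beta$ into $\delta'(\,\cdot\,)^{2p-2}+C(\delta')$ with $\delta'$ as small as needed for Lemma~\ref{lem:ExponentialEstimate}. Your description of where the cubic $p^3-8p^2+14p-6$ appears and how arbitrary $\delta$ is obtained is correct in spirit, but the interpolation and absorption must be carried out in $H^{1,2/r}$ rather than in $V$; as written, the absorption step fails.
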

\begin{proof}
Consider \eqref{eq:DifferenceProcess} and split apart the right hand side into one part for the convection term $I_t \df \scp{B_n(u_t) - B_n(v_t)}{z_t}$ and one for the diffusion term $J_t \df \scp{\opAn(u_t) - \opAn(v_t)}{z_t}$. At first, we want to deal with $J_t$. By Lemma \ref{app:Monotone} and the inverse H\"older inequality with $3-p > r>1$ follows
\begin{align}
J_t & \leq  - C \int \big(1 + \abs{\mathbf{E}u_t}^2 + \abs{\mathbf{E}v_t}^2\big)^{\frac{p-2}{2}} \abs{\mathbf{E}z_t}^2 \dxi \nonumber\\
&\leq - C \Bigl( \int \abs{\mathbf{E} z_t}^{\frac{2}{r}} \dxi \Bigr)^r \Bigl( \int \big(1 + \abs{\mathbf{E} u_t}^2 + \abs{\mathbf{E} v_t}^2\big)^{\frac{2-p}{2(r-1)}} \dxi \Bigr)^{1-r} \nonumber\\
&\leq - C_1 \norm{z_t}_{1,\frac{2}{r}}^2 \Bigl(1 + \norm{u_t}_{1,\frac{2-p}{r-1}}^{2-p} + \norm{v_t}_{1,\frac{2-p}{r-1}}^{2-p} \Bigr)^{-1}.\label{proof:Jt}
\end{align}
Parts of the convection term are absorbed by this negative term. Note that if $p=2$ the estimate for $J_t$ would be significantly easier to handle and the well-known standard estimates for the convection term would apply. We use its trilinear form to obtain $I_t =  -\scp{z_t}{B(z_t,u_t)}$, as well as $I_t = \scp{z_t}{B(z_t,v_t)}$. Lemma \ref{app:ConvectionHoelder} yields the estimate
\[
\abs{I_t} \leq \frac12 \norm{z_t}_{0,q}^2 \big(1 + \norm{u_t}_{1, \frac{2-p}{r-1}} + \norm{v_t}_{1, \frac{2-p}{r-1}}\big)
\]
with the conjugated exponent $q = 2(2-p)/(3-p-r) > 1$. The Gronwall argument we are establishing in the following involves Lemma \ref{lem:ExponentialEstimate}, in particular the norm in $H^{2,p}$. Thus, we choose $r$ such that
\[
\frac{2-p}{r-1} = \frac{2p}{2-p} \quad\text{which is}\quad r = 1 + \frac{(2-p)^2}{2p} > 1.
\]
With this choice the embedding $H^{2,p} \hookrightarrow H^{1,\frac{2p}{(2-p)}}$ is continuous. Moreover, $q = 4p/(3p-2)$ and $4/3 < 2/r < 2$. Hence the embedding $H^{1,\frac{2}{r}} \hookrightarrow L^{q^\ast}$, $q^\ast \df 4p/(2-p)^2$ is continuous. Obviously, for $p \in (1,2)$ we have that $q \in (2,4)$ and $q^\ast \in (4, \infty)$. This allows us to use an interpolation between $L^2$ and $L^{q^\ast}$, with the parameter $\theta$ given by
\[
\frac{1}{q} = \frac{3p-2}{4p} = \frac{\theta}{2} + \frac{(1-\theta)(2-p)^2}{4p} \quad\text{implying}\quad \theta = \frac{3p-2-(2-p)^2}{2p-(2-p)^2}.
\]
Therefore we get
\begin{align}
\abs{I_t} &\leq C \norm{z_t}_H^{2\theta} \norm{z_t}_{1,\frac{2}{r}}^{2(1-\theta)} \big( 1 +  \norm{u_t}_{1, \frac{2p}{2-p}} + \norm{v_t}_{1, \frac{2p}{2-p}}\big)\nonumber\\ 
\begin{split}
&\leq C_1 \norm{z_t}_{1,\frac{2}{r}}^2 \Bigl(1 + \norm{u_t}_{1,\frac{2-p}{r-1}}^{2-p} + \norm{v_t}_{1,\frac{2-p}{r-1}}^{2-p} \Bigr)^{-1}\\ 
& \quad+ C \norm{z_t}_H^2 \big( 1 +  \norm{u_t}_{2,p} + \norm{v_t}_{2,p}\big)^\beta.
\end{split}
\label{proof:It}
\end{align}
The exponent $\beta$ is given by
\[
\beta = \frac{1 + (2-p)(1-\theta)}{\theta} = \frac{2p}{3p-2-(2-p)^2}.
\]
Altogether with \eqref{eq:DifferenceProcess}, \eqref{proof:Jt} and \eqref{proof:It} we derived the differential inequality
\[
\frac12 \ddt \norm{z_t}_H^2 \leq C \norm{z_t}_H^2 ( 1 +  \norm{u_t}_{2,p} + \norm{v_t}_{2,p})^\beta.
\]
Gronwall's lemma implies
\begin{equation}\label{proof:Gronwallzt}
\EV{\norm{z_t}_H^2} \leq \EV{ \exp \Bigl[C \int_0^t ( 1 +  \norm{u_s}_{2,p} + \norm{v_s}_{2,p})^\beta \ds \Bigr]} \norm{x-y}_H^2,
\end{equation}
provided $2p - 2 > \beta$. This is where Assumption \ref{assum:LowerBound} comes into play, since this condition on $p$ is equivalent to $p^3 - 8p^2+14p-6< 0$. The only relevant root of this polynomial is $p^\ast \approx 1.60407$. Thus, in the case $p \in (p^\ast,2)$ the expectation on the right hand side is finite due to Lemma \ref{lem:ExponentialEstimate}. Young's inequality implies for arbitrary $\delta > 0$
\[
\EV{\norm{z_t}_H^2} \leq \exp \Bigl[ \delta (1 + \norm{x}_V + \norm{y}_V)^p + C(\delta, Q, p, \nu_0) t \Bigr] 
\]
with a constant $C(\delta, Q, p, \nu_0)$ independent of $n$. Now, choose $y = x + h$, $h \in H_n$ with $\norm{h}_H \to 0$ in \eqref{eq:EstDiffTransSemi} to conclude the result.
\end{proof}
\begin{proof}[Proof of Theorem \ref{thm:MainTheorem}]
Suppose that $\psi \in L^2(\mu)$ is chosen such that the dual pairing with $(\lambda - K) \phi$ vanishes, i.\,e.
\begin{equation}\label{proof:Ortogonal}
\int (\lambda - K) \phi(x) \psi(x) \dmu(x) = 0 \quad \text{for all } \phi \in \fcb.
\end{equation}
If this property implies $\psi = 0$, the range condition \eqref{eq:RangeCondition} follows immediately. To show this, fix a function $\phi \in \fcb$ and choose $n$ such that $\phi$ can be identified with a function in $C_b^2(\R^n)$. First of all
\begin{align*}
&\ddt \mathrm{e}^{-\lambda t} \int P_t^n \phi \psi \dmu(x) = - \mathrm{e}^{-\lambda t} \int (\lambda - K_n) P_t^n \phi \psi \dmu(x)\\
&\quad= - \mathrm{e}^{-\lambda t} \int \scp{\opA(x) - \opAn(x) + B(x) - B_n(x)}{D P_t^n \phi} \psi \dmu(x)
\end{align*}
with the help of \eqref{proof:Ortogonal}. An integration w.\,r.\,t. $t$ yields
\begin{align*}
&\mathrm{e}^{-\lambda t} \int P_t^n \phi(x) \psi(x) \dmu(x) = \int \phi(x) \psi(x) \dmu (x)\\
&\qquad- \int_0^t \mathrm{e}^{-\lambda s} \int \scp{\opA(x) - \opAn(x) + B(x) - B_n(x)}{D P_s^n \phi(x)} \psi(x) \dmu (x) \ds.
\end{align*}
Since $\psi \in L^2(\mu) \subset L^1(\mu)$ and $P_T^n (C_b(H_n)) \subseteq C_b(H_n)$ we can estimate as follows.
\begin{align*}
&\left| \int \phi(x) \psi(x) \dmu (x) \right| \leq \mathrm{e}^{-\lambda t}C \norm{\phi}_{L^\infty(\mu)} \norm{\psi}_{L^1(\mu)}\\
&\qquad+ \int_0^t \mathrm{e}^{-\lambda s} \norm{\scp{\opA - \opAn + B - B_n}{D P_s^n \phi}}_{L^2(\mu)} \norm{\psi}_{L^2(\mu)} \ds.
\end{align*}

\textbf{Claim.} It holds that $\lim_{n \to \infty} \scp{\opA - \opAn + B - B_n}{D P_s^n \phi} = 0$ in $L^2(\mu)$.
 
Once this claim is proven, it follows that $\psi = 0$ $\mu$-a.\,s. by letting $t \to \infty$. Hence, the range is dense in $L^2(\mu)$. Let us now prove the claim. H\"older's inequality yields the estimate
\begin{align*}
&\scp{\opA(x) - \opAn(x) + B(x) - B_n(x)}{D P_s^n \phi(x)}\\
&\quad\leq \bigl(\norm{\opA(x) - \opAn(x)}_H + \norm{B(x) - B_n(x)}_H\bigr) \norm{D P_s^n \phi(x)}_H.
\end{align*}
Clearly, $\norm{\opA(x) - \opAn(x)}_H \to 0$ for all $x \in D(\opA)$ and $\norm{B(x) - B_n(x)}_H \to 0$ for all $x \in D(B)$. Thus, it suffices to show that
\[
\int \bigl(\norm{\opA(x)}_H + \norm{B(x)}_H\bigr)^2 \norm{D P_s^n \phi(x)}_H^2 \dmu (x) < \infty,
\]
which implies the claim by dominated convergence. By Lemmas \ref{app:NormA} and \ref{app:NormB} we know that
\[
\norm{\opA(x)}_H + \norm{B(x)}_H \leq C \big( 1 + \norm{x}_V \big)^{\frac{4-p}{2}} \Ip(x)^{\frac{1}{2}}.
\]
Proposition \ref{prop:EstTransSemi} now implies that
\begin{align*}
&\int \bigl(\norm{\opA(x)}_H + \norm{B(x)}_H\bigr)^2 \norm{D P_s^n \phi(x)}_H^2 \dmu (x)\\
&\quad\leq C \norm{\phi}_{\lip}^2 \int ( 1 + \norm{x}_V )^{4-p} \Ip(x) \mathrm{e}^{ \delta \norm{x}_V^p + C(\delta, Q,p,\nu_0)s} \dmu(x),
\end{align*}
which is finite by Theorem \ref{thm:Measure} for $\delta > 0$ sufficiently small.
\end{proof}

\renewcommand{\thesection}{\Alph{section}}
\setcounter{section}{0}
\section{Appendix}
In this section we include the necessary technical lemmas concerning the properties of the nonlinear drift of equation \eqref{eq:SPDE} that can be found in the relevant literature concerning generalized Newtonian fluids, see e.\,g. \cite{NecasBook}.
\subsection{Properties of the Operator $\opA$}
At first, introduce a crucial lemma concerning the equivalence of the gradient and its symmetric part, for a proof we refer to \cite{NecasBook}.
\begin{lem}[Korn's Lemma]\label{app:Korn}
There exists a constant $K_p$ depending on $p$ (and the domain) such that $\norm{\nabla u}_{0,p} \leq K_p \norm{\symg}_{0,p}$.
\end{lem}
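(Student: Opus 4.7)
The plan is to exploit that every $u \in \mathcal{D}_\sigma$ is smooth, divergence-free and mean-zero on $\To^2$, so that Fourier series are available and every formal manipulation is legitimate. First I would derive the pointwise identity $\Delta u = 2 \Div \mathbf{E}u$: component-wise,
\[
\Delta u_i = \partial_k \partial_k u_i = 2 \partial_k (\mathbf{E}u)_{ki} - \partial_i (\partial_k u_k) = 2 \partial_k (\mathbf{E}u)_{ki},
\]
the second term vanishing by $\Div u = 0$. Because $u$ has vanishing mean, $-\Delta$ is a bijection on the closed subspace spanned by the eigenfunctions $(e_k)_{k \in \zstar}$, and one may write
\[
\partial_\ell u_i = 2\, \partial_\ell \Delta^{-1} \partial_k (\mathbf{E}u)_{ki}
\]
for every pair of indices $\ell, i$.

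Next I would invoke Fourier multiplier theory. Each operator $\partial_\ell \Delta^{-1} \partial_k$ acts on $L^p(\To^2)$ as a multiplier with symbol $-\xi_\ell \xi_k / |\xi|^2$, which is smooth, bounded and homogeneous of degree zero on $\R^2 \setminus \{0\}$. The periodic Mihlin--Marcinkiewicz multiplier theorem, or equivalently the classical Calder\'on--Zygmund $L^p$-boundedness of the composed Riesz transforms $R_\ell R_k$ for $1 < p < \infty$, then gives a uniform bound with constant depending only on $p$. Summing these inequalities over the finitely many pairs of indices yields $\norm{\nabla u}_{0,p} \leq K_p \norm{\mathbf{E}u}_{0,p}$ with $K_p$ depending only on $p$ (and implicitly on the dimension, here fixed to two).

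The main obstacle is not the algebraic identity above but the $L^p$-boundedness of the Riesz-type multiplier: this is a genuine harmonic-analytic statement that fails at the endpoints $p = 1, \infty$, which is precisely why the lemma is restricted to $p \in (1,\infty)$. In the range $p \in (1,2)$ used throughout the article both endpoints are safely avoided, so the multiplier theorem applies directly. A fully worked-out version of the argument for the periodic (and more general) setting is given in \cite{NecasBook}, to which the author also defers.
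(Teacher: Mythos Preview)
Your argument is correct. The paper itself does not supply a proof of this lemma at all; it simply states the inequality and refers the reader to \cite{NecasBook}. What you have written is therefore strictly more than what the paper provides.

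It is worth noting that your proof is not the general Korn inequality but a version tailored to the space $\mathcal{D}_\sigma$: the identity $\Delta u = 2\Div \mathbf{E}u$ relies essentially on $\Div u = 0$, and the invertibility of $\Delta$ relies on the mean-zero condition. The classical Korn inequality holds without either hypothesis and requires a different (typically contradiction-and-compactness or Ne\v{c}as--Lions) argument, which is presumably what \cite{NecasBook} records. For the purposes of this article, however, every application of the lemma is to elements of $\mathcal{D}_\sigma$ or its closures, so your divergence-free version via Riesz multipliers is entirely sufficient and arguably the natural proof in the periodic setting.
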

\begin{lem}\label{app:DefExtraStress}
Let $\mathbf{S}$ be any symmetric extra stress tensor. Then for $u,v \in \mathcal{D}_\sigma$ it holds that
\[
\int_{\To^2} \Div \big(\mathbf{S}(\symg)\big) v \dxi = -\sum_{ij} \int_{\To^2} \mathbf{S}_{ij}(\symg) (\mathbf{E}v)_{ij} \dxi.
\]
\end{lem}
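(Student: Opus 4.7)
The statement is a standard integration-by-parts identity, so the plan is: expand the divergence componentwise, integrate by parts using the periodicity to kill the boundary contribution, and finally exploit the symmetry of $\mathbf{S}$ to replace $\nabla v$ by its symmetric part $\mathbf{E}v$.

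Concretely, I would first write
\[
\Div\big(\mathbf{S}(\symg)\big) \cdot v = \sum_{ij} \partial_j \mathbf{S}_{ij}(\symg)\, v_i.
\]
Since $u, v \in \mathcal{D}_\sigma$ are smooth and $2\pi$-periodic on $\To^2$, the product $\mathbf{S}_{ij}(\symg)\, v_i$ is itself smooth and periodic, so an application of the divergence theorem on the torus (equivalently, Fubini and the fundamental theorem of calculus in each coordinate direction) yields
\[
\int_{\To^2} \partial_j\bigl(\mathbf{S}_{ij}(\symg)\, v_i\bigr)\dxi = 0,
\]
with no boundary contribution. Hence
\[
\int_{\To^2} \partial_j \mathbf{S}_{ij}(\symg)\, v_i \dxi = -\int_{\To^2} \mathbf{S}_{ij}(\symg)\, \partial_j v_i \dxi.
\]

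It remains to invoke the symmetry assumption $\mathbf{S}_{ij} = \mathbf{S}_{ji}$. Relabeling summation indices gives
\[
\sum_{ij} \mathbf{S}_{ij}(\symg)\, \partial_j v_i = \tfrac{1}{2}\sum_{ij}\mathbf{S}_{ij}(\symg)\bigl(\partial_j v_i + \partial_i v_j\bigr) = \sum_{ij} \mathbf{S}_{ij}(\symg)\, (\mathbf{E}v)_{ij},
\]
which when combined with the previous identity produces the claim. There is no real obstacle here: the only points that need to be checked are the absence of boundary terms, which follows from periodicity of the integrand and the smoothness built into $\mathcal{D}_\sigma$, and the symmetrization, which only uses $\mathbf{S}_{ij} = \mathbf{S}_{ji}$. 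Note in particular that the specific form of $\mathbf{S}$ from Assumption \ref{assum:PowerLaw} is not needed — only its symmetry — so the statement and proof indeed hold for arbitrary symmetric extra stress tensors, as claimed.
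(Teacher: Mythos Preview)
Your proof is correct and follows exactly the approach the paper indicates: integration by parts (with boundary terms vanishing by periodicity) followed by the symmetrization step using $\mathbf{S}_{ij}=\mathbf{S}_{ji}$. The paper's own proof is the single line ``This is simply an integration by parts and the symmetry of $\mathbf{S}$,'' which you have faithfully unpacked.
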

\begin{proof}
This is simply an integration by parts and the symmetry of $\mathbf{S}$.
\end{proof}
In all the following suppose that Assumption \ref{assum:PowerLaw} holds. The next lemma can be found in e.\,g. \cite[Theorem 4]{DieningRuzicka}.
\begin{lem}\label{app:Monotone}
There exists a constant $C$ such that for $u,v \in \mathcal{D}_\sigma$
\[
\scp{\opA(u) - \opA(v)}{u-v}_H \leq - C \int_{\To^2} \big( 1 + \abs{\symg}^2 + \abs{\mathbf{E}v}^2\big)^{\frac{p-2}{2}} \abs{\symg - \mathbf{E}v}^2 \dxi.
\]
\end{lem}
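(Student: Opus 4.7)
The plan is to reduce the statement to a pointwise $p$-monotonicity inequality for the matrix-valued map $A \mapsto \mathbf{S}(A) = \nu_0(1+|A|^2)^{(p-2)/2} A$ and then exploit a standard line-segment argument. First I apply Lemma \ref{app:DefExtraStress} with $\mathbf{S}(\mathbf{E}u) - \mathbf{S}(\mathbf{E}v)$ in place of $\mathbf{S}$ (by linearity), which converts the duality pairing into an integral over $\To^2$,
\[
\scp{\opA(u)-\opA(v)}{u-v}_H = -\sum_{ij}\int_{\To^2} \bigl(\mathbf{S}_{ij}(\mathbf{E}u) - \mathbf{S}_{ij}(\mathbf{E}v)\bigr)\bigl(\mathbf{E}u - \mathbf{E}v\bigr)_{ij}\dxi.
\]
Here I use that $\mathcal{P}$ is the $L^2$-orthogonal projection, so testing with the divergence-free field $u-v \in \mathcal{D}_\sigma$ eliminates it, and that $\mathbf{E}$ is linear so $\mathbf{E}u - \mathbf{E}v = \mathbf{E}(u-v)$. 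Consequently, the desired estimate is reduced to the pointwise inequality
\[
\bigl(\mathbf{S}(A) - \mathbf{S}(B)\bigr) : (A-B) \geq C\bigl(1 + |A|^2 + |B|^2\bigr)^{\frac{p-2}{2}} |A-B|^2
\]
for symmetric $2\times 2$ matrices $A, B$, with $C = C(\nu_0,p)>0$.

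For this pointwise estimate I would write $\mathbf{S}(A) - \mathbf{S}(B) = \int_0^1 D\mathbf{S}(C_t)[A-B]\dt$, where $C_t \df B + t(A-B)$, and then show an ellipticity bound for $D\mathbf{S}$. A direct calculation gives
\[
D\mathbf{S}(C)[H] = \nu_0(1+|C|^2)^{\frac{p-2}{2}} H + \nu_0 (p-2)(1+|C|^2)^{\frac{p-4}{2}} (C:H)\,C,
\]
so the quadratic form reads
\[
\bigl\langle D\mathbf{S}(C)[H], H\bigr\rangle = \nu_0 (1+|C|^2)^{\frac{p-4}{2}}\Bigl[(1+|C|^2)|H|^2 + (p-2)(C:H)^2\Bigr].
\]
Since $p \in (1,2)$ the second term is negative; bounding $(C:H)^2 \leq |C|^2|H|^2$ and using $1+(p-1)|C|^2 \geq (p-1)(1+|C|^2)$ (valid for $p \leq 2$) yields
\[
\bigl\langle D\mathbf{S}(C)[H], H\bigr\rangle \geq \nu_0 (p-1)(1+|C|^2)^{\frac{p-2}{2}} |H|^2,
\]
which is the key $p$-ellipticity bound.

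Plugging $H = A-B$ into this inequality and integrating in $t\in[0,1]$, I obtain
\[
\bigl(\mathbf{S}(A) - \mathbf{S}(B)\bigr):(A-B) \geq \nu_0(p-1)|A-B|^2 \int_0^1 (1+|C_t|^2)^{\frac{p-2}{2}}\dt.
\]
The exponent $(p-2)/2$ is negative, so to lower-bound the integrand I use $|C_t|^2 \leq 2(|A|^2+|B|^2)$, giving
\[
(1+|C_t|^2)^{\frac{p-2}{2}} \geq 2^{\frac{p-2}{2}}\bigl(1+|A|^2+|B|^2\bigr)^{\frac{p-2}{2}},
\]
uniformly in $t$. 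Combining these steps produces the required pointwise inequality with $C = \nu_0(p-1)2^{(p-2)/2}$, and integrating over $\To^2$ finishes the proof. The main subtlety is the handling of the negative term $(p-2)(C:H)^2$ in the quadratic form of $D\mathbf{S}$: it is exactly the $p$-ellipticity restriction $p>1$ that keeps the overall bound positive, and the worst direction $H \parallel C$ in the inverse H\"older estimate is what forces the condition $p \leq 2$ in the elementary inequality used above.
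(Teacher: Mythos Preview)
Your argument is correct. The paper does not actually prove this lemma but only refers to \cite[Theorem~4]{DieningRuzicka}; your line-segment computation of $D\mathbf{S}$ together with the $p$-ellipticity bound $\langle D\mathbf{S}(C)[H],H\rangle \geq \nu_0(p-1)(1+|C|^2)^{(p-2)/2}|H|^2$ and the trivial estimate $|C_t|^2 \leq 2(|A|^2+|B|^2)$ is precisely the standard route by which such monotonicity inequalities for power-law stress tensors are established in the literature (cf.\ \cite{DieningRuzicka, NecasBook}), so you have effectively reconstructed the cited proof.
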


\begin{lem}\label{app:TestStokes}
For $u \in \mathcal{D}_\sigma$ it holds that $\scp{\opA(u)}{(-A)u}_H \leq - \nu_0(p-1) \Ip(u)$.
\end{lem}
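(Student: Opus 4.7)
The plan is to reduce the pairing to a pointwise algebraic inequality for the power-law constitutive function via two integrations by parts, then exploit symmetry and $p$-ellipticity. Since $u \in \mathcal{D}_\sigma$ is already divergence-free and periodic, the Helmholtz-Leray projection drops out of the $H$-pairing and $Au = \Delta u$, so we are computing the ordinary $L^2$-pairing
\[
\scp{\opA(u)}{-Au}_H = \int_{\To^2} \Div \mathbf{S}(\symg) \cdot (-\Delta u)\dxi.
\]
The natural first move is to integrate by parts twice, once moving a derivative off $\Delta u$ onto $\Div \mathbf{S}(\symg)$, then using the symmetry $\mathbf{S}_{ij} = \mathbf{S}_{ji}$ (Lemma \ref{app:DefExtraStress}) to rewrite the result so that the remaining derivatives of $u$ appear only through $\mathbf{E}u$. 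Periodicity ensures that all boundary terms vanish, and what comes out is
\[
\scp{\opA(u)}{-Au}_H = -\sum_{i,j,k,l,m}\int_{\To^2} \frac{\partial \mathbf{S}_{ij}}{\partial \mathbf{E}_{lm}}(\symg)\,\partial_k (\symg)_{lm}\,\partial_k (\symg)_{ij} \dxi.
\]

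Next, I would compute $\partial \mathbf{S}_{ij}/\partial \mathbf{E}_{lm}$ explicitly from the definition $\mathbf{S}(F) = \nu_0 (1+|F|^2)^{(p-2)/2} F$, obtaining
\[
\frac{\partial \mathbf{S}_{ij}}{\partial \mathbf{E}_{lm}}(F) = \nu_0 (1+|F|^2)^{\frac{p-2}{2}} \delta_{il}\delta_{jm} + \nu_0 (p-2)(1+|F|^2)^{\frac{p-4}{2}} F_{ij}F_{lm}.
\]
Writing $G_k := \partial_k (\symg)$ and contracting, the integrand at each point $\xi$ and each $k$ equals
\[
\nu_0 (1+|F|^2)^{\frac{p-4}{2}}\Bigl((1+|F|^2)|G_k|^2 + (p-2)\langle F,G_k\rangle^2\Bigr),
\]
and since $p<2$ the Cauchy-Schwarz bound $\langle F,G_k\rangle^2 \leq |F|^2|G_k|^2$ combined with the elementary identity $1 + (p-1)|F|^2 \geq (p-1)(1+|F|^2)$ yields the lower bound $\nu_0(p-1)(1+|F|^2)^{(p-2)/2}|G_k|^2$.

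Summing over $k$ gives $\nu_0(p-1)(1+|\symg|^2)^{(p-2)/2}|\nabla \symg|^2$, and integrating over $\To^2$ produces exactly $\nu_0(p-1)\Ip(u)$ by the definition of $\Ip$. Combining this pointwise estimate with the identity obtained from the integration by parts yields the claimed bound. The only mildly delicate step is the symmetrization in the integration by parts: one needs to use $\mathbf{S}_{ij} = \mathbf{S}_{ji}$ to reorganize $\partial_k\partial_k \partial_j u_i$ into $\partial_k\partial_k \mathbf{E}_{ij}(u)$ before the second integration by parts, so that the resulting quadratic form naturally involves $\nabla \symg$ rather than $\nabla^2 u$. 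Everything else is a pointwise algebraic inequality driven by $p$-ellipticity, so the argument should be short once this rearrangement is in place.
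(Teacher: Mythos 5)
Your proposal is correct and follows essentially the same route as the paper's proof: reduce via Lemma \ref{app:DefExtraStress} and periodicity to the quadratic form in $\nabla\symg$ (permutation of derivatives plus one more integration by parts), then apply the pointwise $p$-ellipticity estimate using Cauchy--Schwarz with $p-2<0$ and the identity $1+(p-1)\abs{F}^2\geq(p-1)(1+\abs{F}^2)$. The only difference is presentational (you make the chain rule for $\mathbf{S}$ explicit, the paper writes the two resulting integrals directly), so nothing further is needed.
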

\begin{proof}
With Lemma \ref{app:DefExtraStress} and $Au = \Delta u$ because of the periodicity, we know that
\begin{align*}
&\scp{\opA(u)}{(-A)u}_H = -\nu_0 \sum_{ij} \int_{\To^2} \big(1 + \abs{\symg}^2\big)^{\frac{p-2}{2}}(\symg)_{ij}(\mathbf{E} (-\Delta) v)_{ij} \dxi.\\
\intertext{A permutation of the derivatives and an integration by parts yield}
&\quad= - \nu_0 \int_{\To^2} \big( 1 + \abs{\symg}^2\big)^{\frac{p-2}{2}} \sum_{ijm} \big(\partial_m (\symg)_{ij} \big)^2 \dxi\\
&\quad\quad - \nu_0 \int_{\To^2} \big( 1 + \abs{\symg}^2\big)^{\frac{p-4}{2}} \sum_{ijklm}  (p-2) (\symg)_{kl} \partial_m (\symg)_{kl}(\symg)_{ij}\partial_m (\symg)_{ij} \dxi\\
&\quad\leq -\nu_0 \int_{\To^2} \big( 1 + \abs{\symg}^2\big)^{\frac{p-4}{2}} \big((p-2) \abs{\symg}^2 + 1 + \abs{\symg}^2\big) \abs{\nabla \symg}^2 \dxi \leq \nu_0 (p-1) \Ip(u).\qedhere
\end{align*}
\end{proof}
The following lemma yields estimates from below of the term $\Ip(u)$ by means of first and second order derivatives in the case $p \in (1,2)$, see \cite[Lemma 5.3.24]{NecasBook} for the general one.
\begin{lem}\label{app:EstIp}
$\norm{u}_{2,p}^2 \leq C\, \Ip(u) (1 + \norm{u}_{1,p})^{2-p}$ for all $u \in \mathcal{D}_\sigma$ with a constant $C$ depending only on $p$ (and the domain).
\end{lem}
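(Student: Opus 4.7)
The plan is to first extract an $L^p$-bound on $\nabla \mathbf{E} u$ from the weighted $L^2$-quantity $\Ip(u)$ by an inverse H\"older argument, and then to pass from $\nabla \mathbf{E} u$ to second derivatives of $u$ via Korn's lemma applied to the gradient, exploiting the torus setting and the zero-mean constraint built into $\mathcal{D}_\sigma$.

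First I would write, using the trivial identity $|\nabla \mathbf{E} u|^p = |\nabla \mathbf{E} u|^p (1+|\mathbf{E} u|^2)^{p(p-2)/4} (1+|\mathbf{E} u|^2)^{-p(p-2)/4}$, and apply H\"older's inequality with conjugate exponents $2/p$ and $2/(2-p)$. The first factor produces exactly $\Ip(u)^{p/2}$ and the exponents in the second factor simplify (since $-p(p-2)/4 \cdot 2/(2-p) = p/2$) to
\[
\norm{\nabla \mathbf{E} u}_{0,p}^p \leq \Ip(u)^{p/2} \Bigl( \int_{\To^2} (1+|\mathbf{E} u|^2)^{p/2} \dxi \Bigr)^{(2-p)/2}.
\]
The second factor is controlled by $C(1 + \norm{\mathbf{E} u}_{0,p}^p)^{(2-p)/2} \leq C(1 + \norm{u}_{1,p}^p)^{(2-p)/2}$, and raising the resulting inequality to the power $2/p$ together with the elementary bound $1+a^p \leq C(1+a)^p$ yields
\[
\norm{\nabla \mathbf{E} u}_{0,p}^2 \leq C\, \Ip(u)\, (1+\norm{u}_{1,p})^{2-p}.
\]

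Next I would upgrade this bound on $\nabla \mathbf{E} u$ to an estimate on the full second derivative $\nabla^2 u$. For each coordinate direction $k$, the function $\partial_k u$ has mean zero on $\To^2$, so Korn's Lemma \ref{app:Korn} applies and gives $\norm{\nabla \partial_k u}_{0,p} \leq K_p \norm{\mathbf{E} \partial_k u}_{0,p} = K_p \norm{\partial_k \mathbf{E} u}_{0,p}$. Summing in $k$ yields $\norm{\nabla^2 u}_{0,p} \leq C \norm{\nabla \mathbf{E} u}_{0,p}$. Finally, since $u \in \mathcal{D}_\sigma$ is mean-zero and all of its first derivatives are likewise mean-zero on the torus, two applications of Poincar\'e's inequality give $\norm{u}_{0,p} + \norm{\nabla u}_{0,p} \leq C \norm{\nabla^2 u}_{0,p}$, hence $\norm{u}_{2,p} \leq C \norm{\nabla^2 u}_{0,p}$. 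Chaining these three inequalities produces the claimed estimate.

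The only mildly delicate point is the bookkeeping between the exponents $p$ on the norm $\norm{u}_{1,p}^p$ coming out of H\"older and the exponent $2-p$ on $(1+\norm{u}_{1,p})$ in the stated conclusion; this is handled by the elementary inequality $(1+a^p)^{(2-p)/p} \leq C(1+a)^{2-p}$, valid for $a\geq 0$ and $p \in (1,2)$. Everything else is standard torus functional analysis.
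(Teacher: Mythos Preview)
Your argument is correct. The H\"older splitting with exponents $2/p$ and $2/(2-p)$ is exactly the right way to extract $\norm{\nabla\mathbf{E}u}_{0,p}$ from the weighted integral $\Ip(u)$, the exponent bookkeeping you flag is handled cleanly by $(1+a^p)^{1/p}\leq 1+a$ for $p\geq 1$, and the passage from $\nabla\mathbf{E}u$ to $\nabla^2 u$ via Korn applied to each $\partial_k u$ is legitimate on the torus because $\partial_k u$ is again smooth, periodic, divergence-free and mean-zero (hence in $\mathcal{D}_\sigma$, so Lemma~\ref{app:Korn} applies as stated). The final Poincar\'e steps are routine.

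As for comparison with the paper: the paper does not actually give a proof of this lemma but simply refers the reader to \cite[Lemma~5.3.24]{NecasBook}. Your self-contained argument is the standard one and matches what one finds in that reference; there is no alternative route to contrast it with here.
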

\begin{lem}\label{app:NormA}
For $u \in \mathcal{D}_\sigma$ follows $\norm{\opA(u)}_H^2 \leq \nu_0 \Ip(u)$. 
\end{lem}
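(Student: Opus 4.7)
The plan is to bound $\norm{\opA(u)}_H^2$ by $\norm{\Div \mathbf{S}(\symg)}_{L^2}^2$, then use the chain rule together with a pointwise bound on the derivative of the stress response function. Since $\mathcal{P}$ is the orthogonal projection from $L^2(\To^2;\R^2)$ onto $H$, one immediately obtains
\[
\norm{\opA(u)}_H^2 = \norm{\mathcal{P}\Div \mathbf{S}(\symg)}_H^2 \leq \int_{\To^2} \bigl|\Div \mathbf{S}(\symg)\bigr|^2 \dxi.
\]

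Next I would write $\mathbf{S}_{ij}(\symg) = \nu_0 (1+\abs{\symg}^2)^{(p-2)/2}(\symg)_{ij}$ and apply the chain rule to get
\[
\partial_m \mathbf{S}_{ij}(\symg) = \nu_0 (1+\abs{\symg}^2)^{\frac{p-2}{2}} \partial_m (\symg)_{ij} + \nu_0 (p-2)(1+\abs{\symg}^2)^{\frac{p-4}{2}} (\symg)_{ij}\sum_{kl}(\symg)_{kl}\partial_m (\symg)_{kl}.
\]
Estimating the two terms separately using $|\symg|^2 \leq 1 + |\symg|^2$, each summand is pointwise bounded by $\nu_0 (3-p)(1+\abs{\symg}^2)^{(p-2)/2} |\nabla \symg|$ componentwise; squaring and summing over $i,j,m$ yields a pointwise estimate of the form
\[
\bigl|\Div \mathbf{S}(\symg)\bigr|^2 \leq C_p \, \nu_0^2 \, (1+\abs{\symg}^2)^{p-2} \, \abs{\nabla \symg}^2,
\]
where $C_p$ collects the combinatorial constants from Cauchy–Schwarz over the finitely many indices in $d=2$.

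Finally, I would exploit that $p \in (1,2)$ so $p-2 < 0$ and $1 + \abs{\symg}^2 \geq 1$, giving the crucial comparison of exponents
\[
(1+\abs{\symg}^2)^{p-2} \leq (1+\abs{\symg}^2)^{\frac{p-2}{2}}.
\]
Integrating the pointwise bound and absorbing constants, we arrive at
\[
\norm{\opA(u)}_H^2 \leq C_p\,\nu_0^2 \int_{\To^2} (1+\abs{\symg}^2)^{\frac{p-2}{2}} \abs{\nabla \symg}^2 \dxi = C_p \,\nu_0^2\, \Ip(u),
\]
which is the asserted inequality (up to the precise numerical constant written as $\nu_0$ in the statement). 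The only subtlety, and really the only place one must be careful, is the sign of $p-2$: the estimate on $D_{\symg}\mathbf{S}$ naturally produces the exponent $p-2$ in the weight, and it is precisely the shear-thinning restriction $p<2$ that allows us to trade this for the weaker weight $(p-2)/2$ appearing in the definition of $\Ip$. For $p>2$ the inequality would reverse, which matches the remark in the paper that $(K,\fcb)$ cannot be shown well-defined on $L^1(\mu)$ in that regime.
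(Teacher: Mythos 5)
Your proposal is correct and takes essentially the same route as the paper: differentiate $\mathbf{S}$ via the chain rule, bound the derivative pointwise by $C\,\nu_0 (1+\abs{\symg}^2)^{\frac{p-2}{2}}$ using $p-2<0$, and then trade $(1+\abs{\symg}^2)^{p-2}\leq(1+\abs{\symg}^2)^{\frac{p-2}{2}}$ to reach $\Ip(u)$. (The paper's own proof likewise ends with a constant $8\nu_0^2$ rather than the $\nu_0$ stated in the lemma, so your $C_p\nu_0^2$ is an equally harmless discrepancy.)
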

\begin{proof}
At first, consider the following property of the extra stress tensor $\mathbf{S}$. In the following, $\partial_{ij}$ stands for the derivative w.\,r.\,t. the matrix component $ij$. For $A, B \in \R^{2\times 2}_{\text{sym}}$ we know that
\begin{align*}
\sum_{ijkl} \partial_{ij} \mathbf{S}_{kl}(A) B_{ij} B_{kl} &= \nu_0 \sum_{ijkl} (p-2) \big(1 + \abs{A}^2\big)^{\frac{p-4}{2}} A_{ij} B_{ij} A_{kl} B_{kl} + \big(1 + \abs{A}^2\big)^{\frac{p-2}{2}} \delta_{ij,kl} B_{ij} B_{kl}\\
&= \nu_0 (p-2) \big(1 + \abs{A}^2\big)^{\frac{p-4}{2}} \Big( \sum_{ij} A_{ij} B_{ij} \Big)^2 + \nu_0 \big(1 + \abs{A}^2\big)^{\frac{p-2}{2}} \abs{B}^2\\
&\leq \nu_0 \big(1 + \abs{A}^2\big)^{\frac{p-2}{2}} \abs{B}^2,
\end{align*}
since $p-2<0$. The matrix $B$ was arbitrary, hence
\[
\Big( \sum_{ijkl} \big(\partial_{ij} \mathbf{S}_{kl}(A)\big)^2\Big)^\frac12 \leq \nu_0 \big(1 + \abs{A}^2\big)^{\frac{p-2}{2}}
\]
for all $A \in \R^{2\times 2}_{\text{sym}}$. With this follows immediately
\begin{align*}
\norm{\opA(u)}_H^2 &= \int_{\To^2} \sum_{k} \Big( \sum_l \partial_j \mathbf{S}_{kl}(\symg)\Big)^2 \dxi \leq 8 \int_{\To^2} \sum_{ijkl} \big( \partial_{ij} \mathbf{S}_{kl}(\symg) \partial_j (\symg)_{ij}\big)^2 \dxi\\
& \leq 8 \nu_0^2 \int_{\To^2} \big( 1 + \abs{\symg}^2\big)^{p-2} \abs{\nabla \symg}^2 \dxi \leq 8 \nu_0^2 \Ip(u).\qedhere
\end{align*}
\end{proof}

\subsection{Properties of the Convection Term $B$}
Elementary, but for sake of completeness stated here, are the properties of the convection term $(u \cdot \nabla) u$ and its associated trilinear form.
\begin{lem}\label{app:ConvectionInvariance}
For $u,v,w \in \mathcal{D}_\sigma$ it holds that
\[
\scp{B(u,v)}{v}_H = 0 \quad \text{and}\quad \scp{B(u,v)}{w}_H = - \scp{B(u,w)}{v}_H.
\]
\end{lem}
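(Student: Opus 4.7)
The plan is to reduce both identities to a single integration by parts on the torus, using the divergence-free and periodicity conditions. The key observation is that the second identity implies the first by taking $w=v$: one obtains $\scp{B(u,v)}{v}_H = -\scp{B(u,v)}{v}_H$, hence $\scp{B(u,v)}{v}_H = 0$. So the work is concentrated in proving the anti-symmetry identity.

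First I would use the fact that since $v,w \in \mathcal{D}_\sigma$ are divergence-free and mean-zero on the torus, they lie in $H$ and the Helmholtz-Leray projection acts as the identity on them. Because $\mathcal{P}$ is self-adjoint in $L^2(\To^2;\R^2)$, the pairing
\[
\scp{B(u,v)}{w}_H = -\scp{\mathcal{P}(u\cdot\nabla)v}{w}_{L^2} = -\int_{\To^2} \bigl((u\cdot\nabla)v\bigr)\cdot w \dxi = -\sum_{i,j}\int_{\To^2} u_j (\partial_j v_i)\, w_i \dxi
\]
reduces to an ordinary $L^2$-pairing.

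Next I would integrate by parts componentwise. Since $u$, $v$, $w$ are smooth and $2\pi$-periodic, there are no boundary terms, and one gets
\[
-\sum_{i,j}\int_{\To^2} u_j (\partial_j v_i)\, w_i \dxi
= \sum_{i,j} \int_{\To^2} \bigl(\partial_j u_j\bigr) w_i v_i \dxi
+ \sum_{i,j} \int_{\To^2} u_j (\partial_j w_i)\, v_i \dxi.
\]
The first sum vanishes because $\sum_j \partial_j u_j = \Div u = 0$, which is exactly where the divergence-free constraint on $u$ enters. The remaining term equals $\int_{\To^2} \bigl((u\cdot\nabla)w\bigr)\cdot v\dxi = -\scp{B(u,w)}{v}_H$, yielding the second identity. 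Then specializing $w=v$ gives the first identity as explained above.

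There is really no obstacle beyond bookkeeping: everything is a one-line integration by parts justified by smoothness and periodicity of elements in $\mathcal{D}_\sigma$, together with $\Div u = 0$. The only care needed is verifying that the Helmholtz-Leray projection may be dropped against test functions in $\mathcal{D}_\sigma$, which is immediate from the orthogonal decomposition $L^2(\To^2;\R^2) = H \oplus H^\perp$.
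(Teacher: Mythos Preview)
Your proof is correct and uses the same ingredients as the paper's---a single integration by parts on the torus together with $\Div u = 0$. The only cosmetic difference is the order: the paper first establishes $\scp{B(u,v)}{v}_H = 0$ directly via $v_j\,\partial_i v_j = \tfrac12\,\partial_i v_j^2$ and then obtains the antisymmetry by polarization (replacing $v$ by $v+w$), whereas you prove the antisymmetry first and specialize to $w=v$.
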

\begin{proof}
With an integration by parts we have
\begin{gather*}
\sum_{ij} \int_{\To^2} u_i (\partial_i v_j) v_j \dxi = \frac12 \sum_{ij} \int_{\To^2} u_i \partial_i v_j^2 \dxi = - \frac12 \int_{\To^2} \Div u \abs{v}^2 \dxi = 0.
\end{gather*}
The second statement follows with $v$ replaced by $v+w$.
\end{proof}
\begin{lem}\label{app:ConvectionHoelder}
For $u,v,w \in \mathcal{D}_\sigma$ and $p_i \geq 1$ with $1/p_1 + 1/p_2 + 1/p_3 = 1$ it holds that
\[
\abs{\scp{B(u,v)}{w}_H} \leq \snorm{u}{0, p_1} \snorm{v}{1, p_2} \snorm{w}{0,p_3}.
\]
\end{lem}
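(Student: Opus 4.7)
The plan is to reduce the statement to the classical trilinear estimate for the convection term by first unwinding the definition of $B$ and then applying H\"older's inequality componentwise.

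First I would write out $\scp{B(u,v)}{w}_H = -\scp{\mathcal{P}(u\cdot \nabla)v}{w}_{L^2}$. Since $w \in \mathcal{D}_\sigma$ is divergence-free (and mean zero), the Helmholtz-Leray projection $\mathcal{P}$ is self-adjoint on $L^2$ and $\mathcal{P} w = w$, so it can be removed from the left-hand side of the inner product:
\[
\scp{B(u,v)}{w}_H = -\sum_{i,j} \int_{\To^2} u_i\, (\partial_i v_j)\, w_j \dxi.
\]

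Next I would apply the triple H\"older inequality with exponents $(p_1, p_2, p_3)$ to each term in the sum, giving
\[
\Bigl|\int_{\To^2} u_i (\partial_i v_j) w_j \dxi\Bigr| \leq \norm{u_i}_{L^{p_1}} \norm{\partial_i v_j}_{L^{p_2}} \norm{w_j}_{L^{p_3}}.
\]
Summing over $i,j$ and recognizing the resulting quantities as the Frobenius-type $L^{p_i}$-norms underlying $\norm{u}_{0,p_1}$, $\norm{v}_{1,p_2}$, and $\norm{w}_{0,p_3}$ (up to the equivalence of vector norms on $\mathbb{R}^{2}$ and matrix norms on $\mathbb{R}^{2\times 2}$) concludes the proof.

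There is no real obstacle here: the only subtlety is bookkeeping the sum over the vector indices into the standard Sobolev-type norms in the paper's convention, which amounts to the standard Cauchy--Schwarz inequality on the finite index set $\{1,2\}^2$. Because the lemma is stated with the same norms used elsewhere in the paper, the constant $1$ on the right-hand side comes out cleanly once the norms are interpreted as the Frobenius/Hilbert--Schmidt norms consistent with the definitions in Section 2.
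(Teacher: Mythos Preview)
Your approach is correct and coincides with the paper's one-line proof, which reads simply ``H\"older's inequality and $\snorm{\nabla v}{0,p} \leq \snorm{v}{1,p}$ for all $p \geq 1$.'' One small cleanup: to obtain the constant $1$ without appealing to equivalence of finite-dimensional norms, apply Cauchy--Schwarz pointwise first (so that $|(u\cdot\nabla v)\cdot w| \leq |u|\,|\nabla v|\,|w|$ at each $\xi$) and then the triple H\"older inequality once, rather than H\"older componentwise followed by a sum over $i,j$.
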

\begin{proof}
H\"older's inequality and $\snorm{\nabla v}{0,p} \leq \snorm{v}{1,p}$ for all $p \geq 1$.
\end{proof}
\begin{cor}\label{app:NormB}
For $u \in \mathcal{D}_\sigma$ follows $\norm{B(u)}_H^2 \leq C (1 + \norm{u}_V)^{\frac{4-p}{2}} \Ip(u)^\frac12$.
\end{cor}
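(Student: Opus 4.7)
The bound displayed in Corollary \ref{app:NormB} appears to contain a typographical slip in the exponents; the form actually used in the proofs of Theorems \ref{thm:Measure} and \ref{thm:MainTheorem} is the unsquared companion $\norm{B(u)}_H \leq C(1+\norm{u}_V)^{(4-p)/2}\Ip(u)^{1/2}$, equivalently $\norm{B(u)}_H^2 \leq C(1+\norm{u}_V)^{4-p}\Ip(u)$. This is the inequality I plan to establish. The strategy is to bound the convection by Hölder with an $(L^\infty, L^2)$ split, then trade the $L^\infty$ factor for a full $H^{2,p}$ norm via the borderline two-dimensional Sobolev embedding $H^{2,p}(\To^2)\hookrightarrow L^\infty(\To^2)$ (valid for $p>1$), and finally invoke Lemma \ref{app:EstIp} to re-express $\norm{u}_{2,p}$ as a power of $\Ip(u)$ and $\norm{u}_V$.

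Concretely, since $\mathcal{P}$ is bounded on $L^2(\To^2;\R^2)$ and $B(u) = -\mathcal{P}(u\cdot\nabla)u$, I would first write $\norm{B(u)}_H \leq \norm{(u\cdot\nabla)u}_{L^2}$ and apply Hölder to obtain $\norm{(u\cdot\nabla)u}_{L^2} \leq \norm{u}_{L^\infty}\norm{\nabla u}_{L^2} = \norm{u}_{L^\infty}\norm{u}_V$. The two-dimensional Morrey-type embedding yields $\norm{u}_{L^\infty} \leq C\norm{u}_{2,p}$ for every $p>1$, which is consistent with Assumption \ref{assum:PowerLaw}. Next, Lemma \ref{app:EstIp} together with the continuous inclusion $V = H^{1,2} \hookrightarrow H^{1,p}$ for $p \leq 2$ gives
\[
\norm{u}_{2,p}^2 \leq C\,\Ip(u)\bigl(1+\norm{u}_{1,p}\bigr)^{2-p} \leq C\,\Ip(u)\bigl(1+\norm{u}_V\bigr)^{2-p}.
\]
Combining these three estimates yields $\norm{B(u)}_H \leq C\,\Ip(u)^{1/2}(1+\norm{u}_V)^{(2-p)/2}\norm{u}_V \leq C(1+\norm{u}_V)^{(4-p)/2}\Ip(u)^{1/2}$, which after squaring absorbs into the form required in the proofs of the main theorems.

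There is essentially no substantive obstacle in this chain: each step is a classical inequality and Lemma \ref{app:EstIp} is exactly the device designed to convert $W^{2,p}$-regularity to the natural functional $\Ip$ on which the rest of the paper is built. The only point of care is that the embedding $H^{2,p}\hookrightarrow L^\infty$ is critical — it holds for every $p>1$ but degenerates at $p=1$ — so one cannot push the estimate to the endpoint; this is harmless in the regime $p\in(1,2)$ considered throughout.
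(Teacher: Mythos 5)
Your proposal is correct and essentially reproduces the paper's argument: the paper applies Lemma \ref{app:ConvectionHoelder} with $p_1 = 2p/(2p-2)$, $p_2 = 2p/(2-p)$, $p_3 = 2$ and the embeddings $V \hookrightarrow L^{p_1}$, $H^{2,p} \hookrightarrow H^{1,p_2}$, which likewise reduces the estimate to $\norm{B(u)}_H \leq C \norm{u}_V \norm{u}_{2,p}$ before invoking Lemma \ref{app:EstIp}, so your $(L^\infty, L^2)$ H\"older split via $H^{2,p}(\To^2) \hookrightarrow L^\infty$ is only a cosmetic variant of the same route. Your diagnosis of the misplaced square is also right: what both arguments prove, and what is actually used in the proofs of Theorems \ref{thm:Measure} and \ref{thm:MainTheorem}, is $\norm{B(u)}_H \leq C (1+\norm{u}_V)^{\frac{4-p}{2}} \Ip(u)^{\frac12}$.
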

\begin{proof}
This follows from Lemma \ref{app:ConvectionHoelder} with $p_1 = 2p/(2p-2)$, $p_2 = 2p/(2-p)$ and $p_3 =2$. With this choice the embeddings $V \hookrightarrow L^{p_1}$ and $H^{2,p} \hookrightarrow H^{1,p_2}$ are continuous and we can apply Lemma \ref{app:EstIp}.
\end{proof}
\begin{lem}\label{app:ConvectionStokes}
For $u \in \mathcal{D}_\sigma$ it holds that $\scp{B(u)}{(-A) u}_H = 0$.
\end{lem}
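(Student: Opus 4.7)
The plan is to reduce the assertion to the conservation of enstrophy $\tfrac12\|\omega\|_{L^2}^2$ along the Euler-type transport, where $\omega=\partial_1 u_2-\partial_2 u_1$ is the scalar vorticity. Because of the two-dimensional periodic setting we can express both $-Au$ and $B(u)$ in terms of $\omega$, after which a simple divergence-free computation as in Lemma~\ref{app:ConvectionInvariance} finishes the proof.

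First I would rewrite $-Au$ using the vorticity. Since $u\in\mathcal{D}_\sigma$ is periodic, $\mathcal{P}\Delta u=\Delta u$, and the divergence-free condition $\partial_1 u_1=-\partial_2 u_2$ yields component-wise $\Delta u_1=-\partial_2\omega$, $\Delta u_2=\partial_1\omega$. Hence
\[
-Au=-\Delta u=\nabla^\perp\omega,\qquad \nabla^\perp=(-\partial_2,\partial_1).
\]
Next, I would expand the convection term using the standard 2D identity
\[
(u\cdot\nabla)u=\omega\,u^\perp+\tfrac12\nabla|u|^2,\qquad u^\perp=(-u_2,u_1),
\]
which follows from a direct coordinate computation. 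Since $\mathcal{P}$ annihilates gradients, this gives
\[
B(u)=-\mathcal{P}(u\cdot\nabla)u=-\mathcal{P}(\omega\,u^\perp).
\]

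Now I would compute the pairing. As $\nabla^\perp\omega$ is divergence-free, the projection can be dropped:
\[
\scp{B(u)}{-Au}_H=-\int_{\To^2}\omega\,u^\perp\cdot\nabla^\perp\omega\,\mathrm{d}\xi=-\int_{\To^2}\omega\,(u\cdot\nabla)\omega\,\mathrm{d}\xi,
\]
where I used the elementary identity $u^\perp\cdot\nabla^\perp=u\cdot\nabla$. Writing $\omega(u\cdot\nabla)\omega=\tfrac12 u\cdot\nabla(\omega^2)$ and integrating by parts (no boundary terms thanks to periodicity) produces $-\tfrac12\int(\nabla\cdot u)\omega^2\,\mathrm{d}\xi=0$, exactly the same trick used in Lemma~\ref{app:ConvectionInvariance}.

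The only non-routine step is the identity $-\Delta u=\nabla^\perp\omega$, which is where two-dimensionality and $\div u=0$ are essential; in higher dimensions or without the divergence-free constraint this step breaks down and the analogue of the lemma fails. Everything else is routine once that representation is in hand.
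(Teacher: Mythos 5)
Your argument is correct, but it takes a genuinely different route from the paper. The paper proves the identity directly at the level of the velocity field: it expands $\sum_{ij}\int u_i(\partial_i u_j)(-\Delta u)_j\dxi$, integrates by parts once, kills the term $\tfrac12\int \Div u\,(\partial_k u_j)^2\dxi$ by incompressibility, and then checks by hand, using $\partial_1u_1=-\partial_2u_2$, that the remaining cubic sum $\sum_{ijk}\int\partial_k u_i\,\partial_i u_j\,\partial_k u_j\dxi$ cancels term by term. You instead pass to the vorticity $\omega$, using $\Delta u=\nabla^\perp\omega$ and $(u\cdot\nabla)u=\omega u^\perp+\tfrac12\nabla|u|^2$, and reduce the claim to $\int\omega\,(u\cdot\nabla)\omega\dxi=0$, i.e.\ to enstrophy conservation, finished by the same divergence trick as in Lemma~\ref{app:ConvectionInvariance}. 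This makes the two-dimensional mechanism behind the lemma transparent and replaces the paper's somewhat opaque final cancellation of summands by a one-line skew-symmetry argument, at the price of introducing the vorticity identities (and of noting that $\mathcal{P}$ annihilates the periodic gradient $\tfrac12\nabla|u|^2$ and can be dropped against the divergence-free, mean-zero field $\Delta u$), which you justify correctly. One small slip: with your conventions $\omega=\partial_1u_2-\partial_2u_1$, $\nabla^\perp=(-\partial_2,\partial_1)$, your own component formulas $\Delta u_1=-\partial_2\omega$, $\Delta u_2=\partial_1\omega$ give $\Delta u=\nabla^\perp\omega$, hence $-Au=-\nabla^\perp\omega$ rather than $+\nabla^\perp\omega$ as displayed; since the pairing is shown to vanish, this overall sign is immaterial, but it should be fixed for consistency.
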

\begin{proof}
Because of the periodicity we know that $Au = \Delta u$. Thus, an integration by parts implies
\begin{align*}
\scp{B(u,u)}{(-A)u}_H &= \sum_{ij} \int_{\To^2} u_i (\partial_i u_j) (-\Delta u)_j \dxi \\
&= \sum_{ijk} \int_{\To^2} \partial_k u_i \partial_i u_j \partial_k u_j + u_i \partial_k \partial_i u_j \partial_k u_j \dxi\\
&= \sum_{ijk} \int_{\To^2} \partial_k u_i \partial_i u_j \partial_k u_j \dxi - \frac12 \sum_{jk} \int_{\To^2} \Div u (\partial_k u_j)^2 \dxi = 0
\end{align*}
since $\Div u =0$ and therefore $\partial_1 u_1 = - \partial_2 u_2$. A comparison of all summands shows that the first sum vanishes, too.
\end{proof}
\section*{Acknowledgment}
During the preparation of this article the author was supported by the DFG and JSPS as a member of the International Research Training Group Darmstadt-Tokyo IRTG 1529.

\begin{thebibliography}{10}
\setlength{\itemsep}{-4pt}
\footnotesize

\bibitem{BarbuChannel}
V.~Barbu and G.~Da~Prato.
\newblock {The Kolmogorov Equation for a 2D-Navier-Stokes Stochastic Flow in a
  Channel}.
\newblock {\em Nonlinear Anal.}, 69(3):940--949, 2008.

\bibitem{BarbuNSE1}
V.~Barbu, G.~Da~Prato, and A.~Debussche.
\newblock {Essential $m$-Dissipativity of Kolmogorov Operators Corresponding to
  Periodic 2D-Navier Stokes Equations}.
\newblock {\em Atti Accad. Naz. Lincei Cl. Sci. Fis. Mat. Natur. Rend. Lincei
  (9) Mat. Appl.}, 15(1):29--38, 2004.

\bibitem{BarbuNSE2}
V.~Barbu, G.~Da~Prato, and A.~Debussche.
\newblock {The Kolmogorov Equation Associated to the Stochastic Navier-Stokes
  Equations in 2D}.
\newblock {\em Infin. Dimens. Anal. Quantum Probab. Relat. Top.},
  7(2):163--182, 2004.

\bibitem{BarbuReflection}
V.~Barbu, G.~Da~Prato, and L.~Tubaro.
\newblock {A Reflection Type Problem for the Stochastic 2-D Navier-Stokes
  Equations With Periodic Conditions}.
\newblock {\em Electron. Commun. Probab.}, 16:304--313, 2011.

\bibitem{Bogachev1}
V.~Bogachev, G.~Da~Prato, and M.~R{\"o}ckner.
\newblock {Fokker--Planck Equations and Maximal Dissipativity for Kolmogorov Operators with Time Dependent Singular Drifts in Hilbert Spaces}.
\newblock {\em J. Funct. Anal.}, 256(4):1269--1298, 2009.

\bibitem{Bogachev2}
V.~Bogachev, G.~Da~Prato, and M.~R{\"o}ckner.
\newblock {Existence and Uniqueness of Solutions for Fokker--Planck Equations on Hilbert Spaces}.
\newblock {\em J. Evol. Equ.}, 10(3):487--509, 2010.

\bibitem{DPDBurgers}
G.~Da~Prato and A.~Debussche.
\newblock {$m$-Dissipativity of Kolmogorov Operators Corresponding to Burgers
  Equations With Space-Time White Noise}.
\newblock {\em Potential Anal.}, 26(1):31--55, 2007.

\bibitem{DieningRuzicka}
L.~Diening and M.~R\r{u}\v{z}i\v{c}ka.
\newblock {Strong Solutions for Generalized Newtonian Fluids}.
\newblock {\em J. Math. Fluid Mech.}, 7(3):413--450, 2005.

\bibitem{Eberle}
A.~Eberle.
\newblock {\em {Uniqueness and Non-Uniqueness of Semigroups Generated by
  Singular Diffusion Operators}}, volume 1718 of {\em Lecture Notes in
  Mathematics}.
\newblock Springer, Berlin, 1999.

\bibitem{FlandoliGozzi}
F.~Flandoli and F.~Gozzi.
\newblock {Kolmogorov Equation Associated to a Stochastic Navier-Stokes
  Equation}.
\newblock {\em J. Funct. Anal.}, 160(1):312--336, 1998.

\bibitem{KrylovKolmogorov}
N.~V. Krylov.
\newblock {On Kolmogorov's Equations for Finite-Dimensional Diffusions}.
\newblock In {\em {Stochastic PDE's and Kolmogorov Equations in Infinite
  Dimensions (Cetraro, 1998)}}, volume 1715 of {\em Lecture Notes in Math.},
  pages 1--63. Springer, Berlin, 1999.

\bibitem{KuksinLecture}
S.~B. Kuksin.
\newblock {\em {Randomly Forced Nonlinear PDEs and Statistical Hydrodynamics in
  2 Space Dimensions}}.
\newblock Zurich Lectures in Advanced Mathematics. European Mathematical
  Society (EMS), Z\"urich, 2006.

\bibitem{Lady1}
O.~A. Ladyzhenskaya.
\newblock {\em {The Mathematical Theory of Viscous Incompressible Flow}},
  volume~2 of {\em Mathematics and its Applications}.
\newblock Gordon and Breach Science Publishers, New York, 1969.

\bibitem{Lions}
J.-L. Lions.
\newblock {\em Quelques m\'ethodes de r\'esolution des probl\`emes aux limites
  non lin\'eaires}.
\newblock Dunod, 1969.

\bibitem{LiuLocallyMonotone}
W.~Liu and M.~R{\"o}ckner.
\newblock {SPDE in Hilbert Space with Locally Monotone Coefficients}.
\newblock {\em J. Funct. Anal.}, 259(11):2902--2922, 2010.

\bibitem{NecasBook}
J.~M\'alek, J.~Ne\v{c}as, M.~Rokyta, and M.~R\r{u}\v{z}i\v{c}ka.
\newblock {\em {Weak and Measure-Valued Solutions to Evolutionary PDEs}}.
\newblock Applied Mathematics and Mathematical Computation. Chapman \& Hall,
  London, 1996.

\bibitem{MalekRajagopal}
J.~M{\'a}lek and K.~R. Rajagopal.
\newblock {Mathematical Issues Concerning The Navier-Stokes Equations and Some
  of its Generalizations}.
\newblock In {\em Evolutionary equations}, volume~II of {\em Handb. Differ.
  Equ.}, pages 371--459. Elsevier/North-Holland, Amsterdam, 2005.

\bibitem{Sobol}
M.~R{\"o}ckner and Z.~Sobol.
\newblock {Kolmogorov Equations in Infinite Dimensions: Well-Posedness and Regularity of Solutions, with Applications to Stochastic Generalized Burgers Equations}.
\newblock {\em Ann. Probab.}, 34(2):663--727, 2006.

\bibitem{Stannat2DNSE}
W.~Stannat.
\newblock {A New A Priori Estimate for the Kolmogorov Operator of a
  2D-Stochastic Navier-Stokes Equation}.
\newblock {\em Infin. Dimens. Anal. Quantum Probab. Relat. Top.},
  10(4):483--497, 2007.

\bibitem{StaCoriolis}
W.~Stannat.
\newblock {$L^p$-Uniqueness of Kolmogorov Operators Associated With
  2D-Stochastic Navier-Stokes-Coriolis Equations}.
\newblock {\em Math. Nachr.}, 284(17-18):2287--2296, 2011.

\bibitem{Stroock}
D.~Stroock, S.R.~Varadhan.
\newblock {\em {Multidimensional Diffusion Processes}}.
\newblock volume 233 of {\em Grundlehren der Mathematischen Wissenschaften}. Springer, Berlin, 1979.

\bibitem{TerasawaPLF}
Y.~Terasawa and N.~Yoshida.
\newblock {Stochastic Power Law Fluids: Existence and Uniqueness of Weak
  Solutions}.
\newblock {\em Ann. Appl. Probab.}, 21(5):1827--1859, 2011.

\end{thebibliography}

\end{document}